\title{The Chern character of \texorpdfstring{$\vartheta$}{ϑ}-summable
\texorpdfstring{$\mathcal{C}_q$}{C(q)}-Fredholm modules over locally convex
differential graded algebras}
\author{Jonas Miehe}
\date{December 11, 2023}
\begin{document}
\maketitle

\begin{abstract}
	Let $\Omega$ be a locally convex \emph{differential graded} algebra.
	We introduce the Chern character of $\vartheta$-summable
	$\mathcal{C}_q$-Fredholm modules over $\Omega$,
	generalizing the JLO cocycle to the differential graded setting.
	This allows treating the case of even $\vartheta$-summable Fredholm modules
	over $\Omega$
	(which has been previously considered by Güneysu/Ludewig in the context of
	localization on loop spaces) and odd
	$\vartheta$-summable Fredholm modules over $\Omega$
	simultaneously.
	We prove all naturally expected properties of this construction and
	relate it to the spectral flow in the odd case.
\end{abstract}

\section{Introduction}
Over the last 35 years, deep connections between index theory, noncommutative
geometry and cyclic homology have been shown to exist using the
algebraic framework provided by Fredholm modules.
Given an even $\vartheta$-summable Fredholm module over a unital Banach
$*$-algebra, it was shown by Connes, \cite{ConnesCyclicCohomology}, and
Jaffe, Lesniewski and Osterwalder, \cite{JLO}, that there exists a Chern
character, an entire cyclic
cocycle on $\mathcal{A}$, which admits a pairing with $K$-theory
$K_0(\mathcal{A})$ thus producing a noncommutative index theorem,
\cite{GetzlerSzenes}, generalizing previous results obtained in the
finitely-summable case.
The weaker $\vartheta$-summability assumption and the
fact that one can allow for infinite chains (subject to entire growth
conditions) makes it possible to treat infinite-dimensional situations, such as
applications in quantum field theory.

This program was completed by Getzler in \cite{GetzlerOdd} who introduced the
Chern character of an \emph{odd} $\vartheta$-summable Fredholm module --
\textit{the odd JLO cocycle} -- which can be
paired with odd $K$-theory $K_1(\mathcal{A})$ producing a natural replacement
of the noncommutative index theorem in the odd case, where the index is
replaced with the spectral flow.

Motivated by an infinite-dimensional variant of Duistermaat-Heckman
localization in equivariant loop space cohomology, which allows for an
remarkably short and elegant -- however formal --
proof of the Atiyah-Singer index theorem, Güneysu and Ludewig considerably
extended the theory of even $\vartheta$-summable Fredholm modules to the
setting of locally convex \emph{differential graded} algebras,
\cite{GüneysuLudewig}.
They constructed an analytic cyclic cocycle which admits a
noncommutative index theorem, which is invariant under suitable homotopies of
Fredholm modules, and which coincides with the JLO cocycle in case the algebra
is trivially graded with the trivial differential.
These constructions allow formulating and proving a localization formula on the
loop space via Chen's iterated integrals in mathematically rigorous way,
confirming the conjectures made by Atiyah, Bismut and Witten
\cite{AtiyahCircular,BismutCMP85}.

The goal of this paper is to complete this program, which can be traced back at
least to Getzler/Jones/Petrack, \cite{GetzlerJonesPetrack}, by extending these
results to the odd-dimensional case.
To this end, given $q\in\mathbb{N}_0$, we will introduce the notion of a
\emph{$\vartheta$-summable $\mathcal{C}_q$-Fredholm module $\mathscr{M}$} over
a locally convex differential graded algebra $\Omega$, where an additional
action of the Clifford algebra $\mathcal{C}_q$ in fact allows us to treat the
even and the (so far not at all treated) odd case
simultaneously: every even $\vartheta$-summable Fredholm module over $\Omega$
is an $\mathcal{C}_0$-Fredholm module over $\Omega$, and every odd
$\vartheta$-summable Fredholm module $\mathscr{M}$ over $\Omega$ canonically
induces an $\vartheta$-summable $\mathcal{C}_1$-Fredholm module
$\tilde{\mathscr{M}}$ over $\Omega$.
Our first main result reads as follows:

\renewcommand*{\thethmALPH}{\Alph{thmALPH}}
\begin{thmALPH}
	Let $\mathscr{M}$ be a $\vartheta$-summable $\mathcal{C}_q$-Fredholm module
	over a locally convex differential graded algebra $\Omega$.
	There exists a canonically given analytic cochain on the entire cyclic
	complex over $\Omega$, the \emph{Chern character $\Chern$ of
	$\mathscr{M}$}, with the following properties:
	\begin{enumerate}
		\item $\Chern$ is coclosed.
		\item If $\mathscr{M}_\T$ denotes the acyclic extension of
		$\mathscr{M}$ (a module over the acyclic extension $\Omega_\T$ of
		$\Omega$), then $\ChernT$ vanishes on the Chen normalization of the
		entire cyclic complex over $\Omega_\T$.
	\end{enumerate}
	If $\mathscr{M}_s$, $s \in [0,1]$, is a homotopy of $\vartheta$-summable
	$\mathcal{C}_q$-Fredholm modules over $\Omega$, then
	$\mathrm{Ch}_{\mathscr{M}^0_\T}$ and
	$\mathrm{Ch}_{\mathscr{M}^1_\T}$ are homologous in the Chen normalization
	of the entire cyclic complex over $\Omega_\T$.
\end{thmALPH}
Also, we show that if $\mathscr{M}$ is an odd $\vartheta$-summable weak
$\mathcal{C}_q$-Fredholm module over a locally convex differential graded
algebra $\Omega$, then the Chern character of the induced
$\mathcal{C}_1$-Fredholm module
$\tilde{\mathscr{M}}$ over $\Omega$ is equal to the odd JLO cocycle in case
$\Omega$ is trivially graded with trivial differential.

In \cref{Section: definition cherng} we generalize the constructions from
\cite{GüneysuCacciatori} to arbitrary
locally convex differential graded algebras thereby proving:
\begin{thmALPH}
	Assume $\Omega$ is a locally convex differential graded algebra.
	For every $g \in \mathrm{GL}_m(\Omega^0)$ there exists a canonically given
	entire cyclic chain $\mathrm{Ch}(g)$ of the entire cyclic complex over
	$\Omega_\T$; this chain is closed in the Chen normalization of the entire
	cyclic complex over $\Omega_\T$.
\end{thmALPH}
Using the combinatorial machinery of the Chern character, we prove a
noncommutative index theorem by pairing the Chern character with the entire
chain from above:
\begin{thmALPH}
	Let $\mathscr{M}$ be an odd $\vartheta$-summable Fredholm
	module over a locally convex differential graded algebra $\Omega$.
	Then one has
	\begin{equation}\label{formel1}
		\left\langle \mathrm{Ch}_{\widetilde{\mathscr{M}_\T}
		},\mathrm{Ch}(g)\right\rangle =\int_0^1 \Tr(\dot{Q}_s e^{-Q_s^2}) \,
		\mathrm{d}s
	\end{equation}
	where $Q_s = (1-s) Q + s \mathbf{c}(g^{-1}) Q \mathbf{c}(g)$ and
	$\dot{Q}_s= \frac{d}{ds}Q_s$.
\end{thmALPH}
The left hand side of \cref{formel1} is precisely the spectral flow
$\mathrm{sf}(Q, \mathbf{c}(g^{-1}) Q
\mathbf{c}(g))$ if $\mathbf{c}(g)$ is unitary.

This paper is organized as follows: in \cref{Section: Algebraic
preliminaries} we give an overview of the algebraic constructions connected
to the cyclic complex and the bar complex.
Moreover, we recall the notion of analytic chains and entire cochains and
define the Chen normalized complex.
\cref{Section: Cq FM} will be devoted to defining $\vartheta$-summable
$\mathcal{C}_q$-Fredholm modules and homotopies of them.
After that, in \cref{Section: odd FM}, we will see how odd
$\vartheta$-summable Fredholm modules fit into this more general framework.
In \cref{Section: Chern character} we will construct the Chern character of a
$\vartheta$-summable $\mathcal{C}_q$-Fredholm module as a cyclic cocycle and
present its properties.
Moreover, we will see that in the odd case this is indeed a differential
graded extension of the odd JLO cocycle.
Finally, in \cref{Section: definition cherng}, we construct the chains
$\mathrm{Ch}(g)$ and prove \cref{formel1} for the pairing with
the Chern character.

\textbf{Acknowledgements:} This paper is part of the author's PhD thesis under
the supervision of Prof. \!Dr. \!Batu Güneysu at the Technical University of
Chemnitz, Germany.
The author would like to thank Sergio Cacciatori and Shu Shen for several
helpful discussions.
\section{Algebraic preliminaries}\label{Section: Algebraic preliminaries}
All vector spaces and algebras will be assumed to be over $\C$, and we always
work with \textit{graded} vector spaces unless specified otherwise.
On a $\Z$-graded vector space we will use the canonical $\Z_2$-grading 
determined by the even and odd elements and correspondingly, tensor products 
and direct sums of $\Z$-graded vector spaces are canonically $\Z$-graded and 
thereby $\Z_2$-graded.
If a vector space carries no specified grading, we turn it into a 
$\Z$-graded (and therefore $\Z_2$-graded) vector space by letting all 
elements have degree zero.

Given $\Z_2$-graded vector spaces $V$ and $W$, the vector space
$\Hom(V, W)$ is $\Z_2$-graded as usual.
Any $A \in \End(V)$ gives rise to a $\Z_2$-graded dual map $A^{\vee} \colon 
\Hom(V, W) \to \Hom(V, W)$ with values in $W$ given by
\begin{align*}
	(A^{\vee}\ell) (v) \coloneqq (-1)^{|A||\ell|}\ell (A(v)), \quad \ell\in
	\mathrm{Hom}(V,W), \quad v\in V.
\end{align*}
We use this definition of the dual map instead of the usual one unless 
specified otherwise.
For $A, B\in \End(V)$ we will denote by $[A, B] \in \End(V)$
the graded commutator (or supercommutator), i. e.
\begin{equation*}
	[A,B] \coloneqq AB-(-1)^{|A||B|}BA,
\end{equation*}
and we will use the notation $[[A, B]] \coloneqq AB + BA$ for the 
anticommutator.
\subsection{Differential graded algebras}
\begin{defn}
	A \textit{differential graded algebra} (\enquote{dg algebra}) $\Omega$ is
	an associative, $\Z$-graded algebra with unit $\mathbf{1}$ equipped
	with a degree $+1$ differential $d$ subject to the graded Leibniz
	rule
	\begin{equation*}
		d(ab) = (da)b + (-1)^{\lvert a \rvert}adb
	\end{equation*}
	for  $a, b \in \Omega$ of pure degree.

	If $\Omega$ carries a locally convex topology\footnote{We always assume
		locally convex topologies to be Hausdorff.} such that $d$ is
	continuous, multiplication is jointly continuous and $\Omega$ is the
	topological direct
	sum of its homogeneous subspaces, then $\Omega$ is a \textit{locally
		convex dg algebra}.
\end{defn}
\begin{eg}
	Let $X$ be a smooth manifold and $\Omega \coloneqq \Omega(X)$ be the
	space of differential forms on $X$.
	Together with the exterior differential and the wedge product, $\Omega$
	is a dg algebra which carries a canonical Fréchet topology turning it
	into a locally convex dg algebra.
\end{eg}
The following construction will be important later on:
\begin{eg}\label{Acyclic extension}
	Let $\Omega$ be a locally convex dg algebra.
	Denote by
	\begin{equation*}
		\Omega_{\mathbb{T}} \coloneqq \Omega[\sigma]
	\end{equation*}
	the $\Z$-graded unital algebra obtained by adjoining the formal variable
	$\sigma$ of degree $-1$ with $\sigma^2 = 0$ and supercommuting with
	elements of $\Omega$.
	By definition, $\theta \in \Omega_{\mathbb{T}}$ can be uniquely written as
	$\theta = \theta' + \sigma \theta''$ where $\theta', \theta'' \in \Omega$.
	We define a differential $d_{\mathbb{T}}$ via
	\begin{equation*}
		d_{\mathbb{T}} \coloneqq d - \iota \quad \text{where} \quad d \theta
		\coloneqq
		d \theta' - \sigma d \theta'' \quad \text{and} \quad \iota \theta
		\coloneqq
		\theta''.
	\end{equation*}
	Using the isomorphism of graded vector spaces $\Omega_{\mathbb{T}} \simeq
	\Omega \oplus \Omega[1]$ (where $\Omega[1]$ is equal to $\Omega$ with
	degrees shifted upwards by one), it follows that $\Omega_{\mathbb{T}}$
	becomes a locally convex dg algebra which we will call the
	\textit{acyclic extension of $\Omega$}.
\end{eg}
Throughout the rest of this chapter, let $\Omega$ be a locally convex dg
algebra.
\subsection{The (reduced) cyclic complex}\label{Section: cyclic complex}
\textit{The (reduced) cyclic complex associated to $\Omega$} is the vector 
space
\begin{equation*}
	\mathrm{C}(\Omega) \coloneqq \bigoplus_{N=0}^{\infty}\Omega \otimes
	\underline{\Omega}[1]^{\otimes N}
\end{equation*}
with grading determined by
\begin{equation*}
	\mathrm{C}_n(\Omega) = \bigoplus_{N=0}^{\infty} \bigoplus_{l_0 + \ldots +
		l_N = n + N} \Omega^{l_0} \otimes \underline{\Omega}^{l_1} \otimes
	\ldots
	\otimes \underline{\Omega}^{l_N}
\end{equation*}
for $n \in \Z$ where $\underline{\Omega} \coloneqq \Omega / \C \mathbf{1}$ is
the quotient vector space.
We will usually denote elements of $\mathrm{C}(\Omega)$ by $(\theta_0,
\ldots, \theta_N)$ instead of $\theta_0 \otimes \ldots \otimes \theta_N$.
We define two degree $+1$ differentials on $\mathrm{C}(\Omega)$ via
\begin{align}
	\underline{d}(\theta_0, \ldots, \theta_N) & \coloneqq (d
	\theta_0, \theta_1, \ldots, \theta_N) - \sum_{k=1}^N
	(-1)^{m_{k-1}}(\theta_0, \ldots, d
	\theta_k, \ldots, \theta_N) \notag \\
	\begin{split}\label{Definition bunderline}
		\underline{b}(\theta_0, \ldots, \theta_N) & \coloneqq
		-\sum_{k=0}^{N-1}(-1)^{m_{k}}(\theta_0, \ldots, \theta_k
		\theta_{k+1},
		\ldots, \theta_N) \\
		& \qquad + (-1)^{(\lvert \theta_{N} \rvert - 1)
			m_{N-1}}(\theta_N\theta_0, \ldots,\theta_{N-1})
	\end{split}
\end{align}
where $m_k \coloneqq \lvert \theta_0 \rvert + \ldots + \lvert \theta_k \rvert
- k$.

The \textit{Connes operator on $\mathrm{C}(\Omega)$} is the degree $-1$
differential defined via
\begin{equation*}
	\underline{B}(\theta_0, \ldots, \theta_N) \coloneqq \sum_{k=0}^N
	(-1)^{(m_{k-1} + 1)(m_N - m_{k-1})}(\mathbf{1}, \theta_{k},
	\ldots,
	\theta_{N}, \theta_0, \ldots, \theta_{k-1})
\end{equation*}
where $m_{-1} = 1$.
The following statement follows from a standard calculation:
\begin{lem}
	The maps $\underline{d}$, $\underline{b}$ and $\underline{B}$ are
	well-defined, pairwise anticommuting differentials.
\end{lem}
Let $\mathrm{C}_{+}(\Omega)$ be the subspace of $\mathrm{C}(\Omega)$ 
containing all elements of even 
degree and $\mathrm{C}_{-}(\Omega)$ be the subspaces containing all elements 
of odd degree.
It follows that we have a $\Z_2$-graded complex
\begin{center}
	\begin{tikzcd}
		\mathrm{C}_{+}(\Omega) \arrow[rr,
		"\underline{d}+\underline{b}-\underline{B}",
		bend left] &  & \mathrm{C}_{-}(\Omega) \arrow[ll,
		"\underline{d}+\underline{b}-\underline{B}", bend left]
	\end{tikzcd}.
\end{center}
\subsection{The bar construction}\label{Section: bar construction}
\subsubsection{Bar chains}
The \textit{bar construction for $\Omega$} is defined as the vector
space
\begin{equation}\label{Bar construction}
	\mathrm{B}(\Omega) \coloneqq \bigoplus_{N=0}^{\infty} \Omega[1]^{\otimes
		N}
\end{equation}
with grading given by
\begin{equation*}
	\mathrm{B}_n(\Omega) = \bigoplus_{N=0}^{\infty} \bigoplus_{l_1 + \ldots +
		l_N = n + N} \Omega^{l_1} \otimes \ldots \otimes \Omega^{l_N}
\end{equation*}
for $n \in \Z$.
We will usually denote elements of $\mathrm{B}(\Omega)$ by $(\theta_1,
\ldots, \theta_N)$ instead of $\theta_1 \otimes \ldots \otimes \theta_N$.
On $\mathrm{B}(\Omega)$ we have two differentials of degree $+1$ given by
\begin{align*}
	d(\theta_1, \ldots, \theta_N) & \coloneqq -\sum_{k=1}^N (-1)^{n_{k-1}}
	(\theta_1, \ldots, d \theta_k, \ldots, \theta_N) \\
	b'(\theta_1, \ldots, \theta_N) & \coloneqq - \sum_{k=1}^{N-1} (-1)^{n_{k}}
	(\theta_1, \ldots, \theta_k \theta_{k+1}, \ldots, \theta_N)
\end{align*}
where $n_k \coloneqq \lvert \theta_1 \rvert + \ldots + \lvert \theta_k \rvert
-k$.
A calculation similar to the one for the (reduced) cyclic complex shows that
$d$ and $b'$ are anti-commuting
differentials turning $\mathrm{B}(\Omega)$ into a $\Z_2$-graded complex with
differential $d + b'$ if we reduce the grading modulo two.
\subsubsection{Algebra valued bar cochains}
For a $\Z_2$-graded algebra $L$ the \textit{space of $L$-valued bar
cochains on $\Omega$} is defined as $\Hom(\mathrm{B}(\Omega), L)$, the space 
of linear maps from $\mathrm{B}(\Omega)$ to $L$.
By the universal property of the tensor product, every $L$-valued bar cochain
$l$ is given by a sequence $(l^{(N)})_{N \in \N_0}$ of multilinear maps
\begin{equation*}
	l^{(N)} \colon \underbrace{\Omega[1] \times \ldots \times \Omega[1]}_N
	\to L
\end{equation*}
where for $N = 0$ we may identify $l^{(0)}$ with an element of $L$.
We turn the space of $L$-valued bar cochains into a $\Z_2$-graded algebra with
product
\begin{equation}\label{Product bar cochains}
	l_1 l_2 (\theta_1, \ldots, \theta_N) \coloneqq \sum_{k=0}^N (-1)^{\lvert
		l_2 \rvert n_k} l_1(\theta_1, \ldots, \theta_k) l_2 (\theta_{k+1},
	\ldots,
	\theta_N)
\end{equation}
and $\Z_2$-grading given by the usual grading on $\Hom(V, W)$ for
$\Z_2$-graded
vector spaces $V$ and $W$.
The \textit{codifferential $\delta$} on $\Hom(\mathrm{B}(\Omega), L)$ is
defined via
\begin{equation*}
	\delta \coloneqq - (d+b')^{\vee} \colon \Hom(\mathrm{B}(\Omega), L) \to
	\Hom(\mathrm{B}(\Omega), L).
\end{equation*}
A standard calculation shows the following
\begin{lem}
	$\delta$ fulfills the graded Leibniz rule, i. e.
	\begin{equation}\label{Z2 graded Leibniz delta}
		\delta(l_1 l_2) = (\delta l_1)l_2 + (-1)^{\lvert l_1 \rvert} l_1
		(\delta
		l_2).
	\end{equation}
	In particular, $\Hom(\mathrm{B}(\Omega), L)$ is a differential
	$\Z_2$-graded algebra.
\end{lem}
\subsection{Analytic chains and entire cochains}\label{Section: entire}
Similarly to the previous section, we define the \textit{space of cyclic
complex valued cochains} to be the space $\Hom(\mathrm{C}(\Omega), \C)$ of
linear maps from $\mathrm{C}(\Omega)$ to $\C$.
To each cyclic cochain corresponds a
sequence $(l^{(N)})_{N \in \N_0}$ of multilinear maps
\begin{equation*}
	l^{(N)} \colon \Omega \times \underbrace{\underline{\Omega}[1] \times
		\ldots \times \underline{\Omega}[1]}_N \to \C.
\end{equation*}
\begin{defn}[\protect{\cite[Definition 3.1]{GüneysuLudewig}}]\label{Analytic}
	We call a cochain $l \in \Hom(\mathrm{C}(\Omega), \C)$ \textit{analytic}
	if there
	exists a continuous seminorm $\nu$ on $\Omega$ and a constant $C > 0$ such
	that for all $N \in \N_0$ and $\theta_0, \ldots, \theta_N \in \Omega$ we
	have
	\begin{equation*}
		\lvert l^{(N)}(\theta_0, \ldots, \theta_N) \rvert \leq
		\frac{C}{\lfloor N/2 \rfloor!} \, \nu(\theta_0) \cdot \ldots \cdot
		\nu(\theta_N).
	\end{equation*}
	We denote the space of analytic cochains by $\mathrm{C}_{\alpha}(\Omega)$.
\end{defn}
Recall that for a continuous seminorm $\nu$ on $\Omega$ the
\textit{$N$-th induced projective tensor seminorm $\pi_{\nu, N}$} on $\Omega
\otimes \underline{\Omega}[1]^{\otimes N}$ is given by
\begin{equation*}
	\pi_{\nu, N}(c_N) = \inf\left\{\sum_{\gamma} \nu(\theta_0^{(\gamma)})
	\cdot
	\ldots \cdot \nu(\theta_N^{(\gamma)}) : c_N = \sum_{\gamma}
	\theta_0^{(\gamma)} \otimes \ldots \otimes \theta_N^{(\gamma)}\right\},
\end{equation*}
the infimum being taken over all realizations of $c_N \in \mathrm{C}(\Omega)$
as a finite sum as in the above display.
If $l$ is an analytic cochain, it follows immediately from the definition
above that all induced linear maps $l^{(N)}$ are continuous on the projective 
tensor product $\Omega \otimes
\underline{\Omega}[1]^{\otimes N}$.
Moreover, since multiplication and the differential $d$ are assumed to be
continuous on $\Omega$, it follows that the codifferentials
$\underline{d}^\vee$,
$\underline{b}^\vee$ and $\underline{B}^\vee$ preserve the space
$\mathrm{C}_{\alpha}(\Omega)$.
\begin{defn}[\protect{\cite[Definition 3.2]{GüneysuLudewig}}]
	For a continuous seminorm $\nu$ on $\Omega$ the seminorm
	$\epsilon_\nu$ on $\mathrm{C}(\Omega)$ is given by
	\begin{equation}\label{Definition entire seminorm}
		\epsilon_{\nu}(c) \coloneqq \sum_{N = 0}^\infty \frac{\pi_{\nu,
				N}(c_N)}{\lfloor N/2 \rfloor!}, \qquad c = \sum_{N =
			0}^\infty c_N \in
		\mathrm{C}(\Omega), \quad \text{where $c_N \in \Omega \otimes
			\underline{\Omega}[1]^{\otimes N}$}.
	\end{equation}
	Note that this is well-defined since $c$ is a finite sum.
	Then, the \textit{space $\mathrm{C}^{\epsilon}(\Omega)$ of entire chains}
	is defined as the
	completion of $\mathrm{C}(\Omega)$ with respect to the seminorms
	$\epsilon_{\nu}$.
\end{defn}
If $l$ is an analytic cochain, it follows from the definitions that it is
continuous with respect to the seminorms $\epsilon_{\nu}$ and hence extends
to a continuous functional on $\mathrm{C}^\epsilon(\Omega)$.
Also, since $\underline{d}$, $\underline{b}$ and $\underline{B}$ are
continuous
on $\mathrm{C}(\Omega)$ with respect to the seminorms $\epsilon_\nu$, they
extend to differentials on $\mathrm{C}^\epsilon(\Omega)$ which are odd with
respect to the induced $\Z_2$-grading.
Hence, we have defined the \textit{entire (reduced) cyclic complex of
	$\Omega$}.
\subsection{Chen normalization}\label{Section: chen normalization}
Similarly to \cite{GetzlerJonesPetrack}, one often considers a quotient of the
reduced cyclic complex $\mathrm{C}(\Omega_{\mathbb{T}})$ by some subcomplex
$\mathrm{D}^\T(\Omega)$ since some of the chains which are of interest are
not closed in the larger complex.

To this end, let
\begin{equation*}
	S_i^{(f)} \colon \mathrm{C}(\Omega_{\mathbb{T}}) \to
	\mathrm{C}(\Omega_{\mathbb{T}})
\end{equation*}
be the continuous linear map given by
\begin{equation}\label{Si f}
	S_i^{(f)}(\theta_0, \dots, \theta_N) =
	\begin{cases}
		(-1)^{m_i}(\theta_0, \ldots, \theta_{i}, f, \theta_{i+1}, \ldots,
		\theta_N), & 0 \leq i \leq N - 1 \\
		(-1)^{m_N}(\theta_0, \ldots, \theta_N, f), & i = N \\
		0, & \text{otherwise},
	\end{cases}
\end{equation}
for all $f \in \Omega^0$ and $i\in \N_0$, and let
\begin{equation*}
	T_i^{(f)} \colon \mathrm{C}(\Omega_{\mathbb{T}}) \to
	\mathrm{C}(\Omega_{\mathbb{T}})
\end{equation*}
be the continuous linear map defined via
\begin{align*}
	T_i^{(f)}(\theta_0, \ldots, \theta_N) =
	\begin{cases}
		\begin{aligned}[b]
			& (\theta_0, \ldots, \theta_i, f \theta_{i+1}, \ldots, \theta_N)
			\\
			& \quad -(\theta_0, \ldots, \theta_i f, \theta_{i+1}, \ldots,
			\theta_N)
			\\
			& \quad -(\theta_0, \ldots, \theta_i, df, \theta_{i+1}, \ldots,
			\theta_N)
		\end{aligned}, & \quad 0 \leq i \leq N - 1 \\
		\begin{aligned}[b]
			& (f\theta_0, \ldots, \theta_N) \\
			& \quad -(\theta_0, \ldots, \theta_N f) \\
			& \quad -(\theta_0, \ldots, \theta_N, df)
		\end{aligned}, & \quad i = N
	\end{cases}
\end{align*}
for all $f \in \Omega^0$ and $i\in \N_0$.
Note that $\Omega^0 \subseteq \Omega_\T^0$ is a proper subset in general.
Moreover, define
\begin{align}
	&S \colon \mathrm{C}(\Omega_{\mathbb{T}}) \to
	\mathrm{C}(\Omega_{\mathbb{T}}), \quad (\theta_0, \dots, \theta_N) \mapsto
	\sum_{k=0}^N (\theta_0, \dots, \theta_{k}, \sigma, \theta_{k+1}, \dots,
	\theta_N),\label{Chen normalization S}\\
	&R \colon \mathrm{C}(\Omega_{\mathbb{T}}) \to
	\mathrm{C}(\Omega_{\mathbb{T}}), \quad (\theta_0, \dots, \theta_N) \mapsto
	(\sigma \theta_0, \dots, \theta_N).\label{Chen normalization R}
\end{align}
Finally, we let $\mathrm{D}^{\mathbb{T}}(\Omega) \subseteq
\mathrm{C}(\Omega_{\mathbb{T}})$ be the smallest subcomplex containing the
images of the operators
\begin{equation}\label{Chen operators}
	S + \mathbf{1}, \quad R, \quad S_{i}^{(f)}, \quad T_{i}^{(f)}
\end{equation}
for all $i \in \N_0$ and $f \in \Omega^0$.
\begin{defn}[\protect{\cite[Definition 3.4]{GüneysuLudewig}}]
	The quotient complex
	\begin{equation*}
		\mathrm{N}^{\T, \epsilon}(\Omega) \coloneqq
		\mathrm{C}^{\epsilon}(\Omega_\T) /
		\overline{\mathrm{D}^{\mathbb{T}}(\Omega)},
	\end{equation*}
	where the closure is taken inside $\mathrm{C}^\epsilon(\Omega_\T)$, is 
	called the \textit{Chen normalized entire complex}.
	The space $\mathrm{N}_{\T, \alpha}(\Omega)$ of analytic cochains 
	vanishing on $\mathrm{D}^{\mathbb{T}}(\Omega)$ will be referred to as the 
	\textit{space of extended Chen normalized analytic cochains}.
	This induces $\Z_2$-graded complexes
	\begin{equation*}
		\begin{tikzcd}
			\mathrm{N}_{+}^{\T, \epsilon}(\Omega) \ar[rr,  bend left=20,
			"\underline{d}+\underline{b}-\underline{B}"] & &  \ar[ll,  bend
			left=20, "\underline{d}+\underline{b}-\underline{B}"]
			\mathrm{N}_{-}^{\T, \epsilon}(\Omega),
		\end{tikzcd}
		~~~~\text{and}~~~~
		\begin{tikzcd}
			\mathrm{N}_{\T, \alpha}^+(\Omega) \ar[rr,  bend left=20,
			"(\underline{d}+\underline{b}-\underline{B})^{\vee}"] & &  \ar[ll,
			bend
			left=20, "(\underline{d}+\underline{b}-\underline{B})^{\vee}"]
			\mathrm{N}_{\T, \alpha}^-(\Omega).
		\end{tikzcd}
	\end{equation*}
\end{defn}
\section{\texorpdfstring{$\vartheta$}{ϑ}-summable
	\texorpdfstring{$\mathcal{C}_q$}{C(q)}-Fredholm modules}\label{Section:
	Cq FM}
For $q \in \N_0$ we denote by $\mathcal{C}_q$ the complex Clifford algebra
with generators $e_1, \ldots, e_q$ subject to the relations
\begin{equation*}
	e_i e_j + e_j e_i = -2 \delta_{ij}.
\end{equation*}
We turn $\mathcal{C}_q$ into a $*$-algebra by setting $e_i^* \coloneqq -e_i$
for
each $i$.
Moreover, $\mathcal{C}_q$ becomes a graded algebra by declaring the generators
$e_i$ to be odd.
Note that for $q = 0$ we have $\mathcal{C}_q = \C$.

Given a $\Z_2$-graded Hilbert space $\HH$ we denote by $\Lop(\HH)$ the
$\Z_2$-graded
$C^*$-algebra of bounded operators on $\HH$ where the involution is given by
the Hilbert space adjoint.
\begin{defn}
	A \textit{Hilbert module over $\mathcal{C}_q$} is a $\Z_2$-graded Hilbert
	space $\HH$
	together with a $\Z_2$-graded unital $*$-representation $\mathcal{C}_q \to
	\Lop(\HH)$.
\end{defn}
\begin{rem}
	\begin{enumerate}
		\item Note that for $q = 0$ this is nothing but a $\Z_2$-graded
		Hilbert space.
		\item If $\HH$ is a Hilbert module over $\mathcal{C}_q$, this
		canonically induces a Hilbert module structure on $\HH^m$ for $m \in
		\N$ by letting $\mathcal{C}_q$ act diagonally.
	\end{enumerate}
\end{rem}
\begin{nota}
	If $\HH$ is a Hilbert module over $\mathcal{C}_q$, we denote by
	$\Lop_{\mathcal{C}_q}(\HH)$ the subalgebra of $\Lop(\HH)$ consisting of
	operators on $\HH$ supercommuting with the $\mathcal{C}_q$-action.
	Furthermore, we define $\Lopclifqtrace(\HH)$ to be the subalgebra of
	trace-class operators supercommuting with the $\mathcal{C}_q$-action.
\end{nota}
\begin{defn}\label{Def FM}
	A \textit{$\vartheta$-summable $\mathcal{C}_q$-Fredholm module over a
	locally convex dg algebra $\Omega$} is a triple $\mathscr{M} = (\HH,
	\mathbf{c}, Q)$, where
	\begin{enumerate}[label=(\roman*)]
		\item $\HH$ is a Hilbert module over $\mathcal{C}_q$;
		\item $\mathbf{c} \colon \Omega \to \Lopclifq(\HH)$ is a
		continuous linear map which preserves the $\Z_2$-grading and 
		satisfies $\mathbf{c}(\mathbf{1}) = \mathbf{1}$,
		\item $Q$ is an odd, self-adjoint (possibly) unbounded operator on
		$\HH$ supercommuting with $\mathcal{C}_q$,
	\end{enumerate}
	such that the relations
	\begin{equation} \label{Multiplicativity}
		[Q, \mathbf{c}(f)] = \mathbf{c}(df), \quad \text{and} \quad
		\mathbf{c}(f\theta) = \mathbf{c}(f)\mathbf{c}(\theta), \quad
		\mathbf{c}(\theta f) = \mathbf{c}(\theta)\mathbf{c}(f)
	\end{equation}
	hold for all $f \in \Omega^0$ and $\theta \in \Omega$.
	Furthermore, the above data has to fulfill the following analytic 
	requirements:
	\begin{enumerate}
		\item[(A1)] For all $\theta\in \Omega$ the operators $C_\pm(\theta)
		\coloneqq \Delta^{\pm1/2}\mathbf{c}(\theta)\Delta^{\mp1/2}$ are
		densely defined and bounded, where $\Delta \coloneqq Q^2 + 
		\mathbf{1}$; moreover, the
		map $\theta \mapsto C_\pm(\theta)$ is continuous from $\Omega$
		to $\Lopclifq(\HH)$;
		\item[(A2)] For each $T>0$ one has $e^{-T Q^2}\in
		\Lopclifqtrace(\HH)$.
	\end{enumerate}
	If $\mathscr{M}$ satisfies all the above assumptions except
	\eqref{Multiplicativity}, we call $\mathscr{M}$ a
	\textit{$\vartheta$-summable weak
		$\mathcal{C}_q$-Fredholm module}.
\end{defn}
\begin{eg}
	For $q = 0$ a $\vartheta$-summable (weak) $\mathcal{C}_0$-Fredholm module
	is nothing else but an even $\vartheta$-summable (weak)
	Fredholm module in the sense of \cite[Definition 2.1]{GüneysuLudewig}.
\end{eg}
\begin{defn}\label{Acyclic extension FM}
	Let $\mathscr{M} = (\HH, \mathbf{c}, Q)$ be a weak $\vartheta$-summable
	$\mathcal{C}_q$-Fredholm module over a locally convex dg algebra $\Omega$.
	We extend $\mathscr{M}$ to a weak $\vartheta$-summable
	$\mathcal{C}_q$-Fredholm module $\mathscr{M}_\T = (\HH, \mathbf{c}_\T,
	Q)$ over $\Omega_\T$, the acyclic extension of $\Omega$ (\cref{Acyclic
		extension}), via
	\begin{equation*}
		\mathbf{c}_\T(\theta' + \sigma \theta'') \coloneqq
		\mathbf{c}(\theta').
	\end{equation*}
	We call this the \textit{acyclic extension of $\mathscr{M}$.}
\end{defn}
\begin{defn}
	An \textit{isomorphism} between two $\vartheta$-summable weak
	$\mathcal{C}_q$-Fredholm modules $\mathscr{M}_1 = (\HH_1, \mathbf{c}_1,
	Q_1)$ and $\mathscr{M}_2 = (\HH_2, \mathbf{c}_2, Q_2)$ over a locally
	convex dg algebra $\Omega$ consists of a unitary
	isomorphism $U \colon \HH_1 \to \HH_2$ of $\Z_2$-graded Hilbert spaces
	intertwining all structure maps, i. e.
	\begin{equation*}
		U(q v) = q (Uv) \quad \forall q \in \mathcal{C}_q, v \in \HH_1,
		\qquad
		\text{and} \qquad U(\mathbf{c}_1(\theta) v) =
		\mathbf{c}_2(\theta)(Uv)
		\quad \forall \theta \in \Omega, v \in \HH_1,
	\end{equation*}
	and $U (Q_1 v) = Q_2 (Uv)$ for all $v \in \mathrm{dom}(Q_1)$ with the
	implicit assumption that $U$ maps $\mathrm{dom}(Q_1)$ onto
	$\mathrm{dom}(Q_2)$.
\end{defn}
Finally, as in \cite[Definition 6.1]{GüneysuLudewig}, we introduce the notion
of a homotopy of $\vartheta$-summable $\mathcal{C}_q$-Fredholm modules:
\begin{defn}\label{Definition homotopy}
	A \textit{homotopy of $\vartheta$-summable (weak) $\mathcal{C}_q$-Fredholm
	modules over $\Omega$} is a family $\mathscr{M}^s = (\HH,
	\mathbf{c}^s,
	Q_s)$, $s \in [0,1]$, of $\vartheta$-summable (weak)
	$\mathcal{C}_q$-Fredholm modules over $\Omega$ (with fixed
	$\mathcal{C}_q$ action) such that:
	\begin{enumerate}
		\item[(H1)] In (A1) from \cref{Def FM} one can choose all seminorms 
		to be independent of $s$ and for all $T>0$, one has
		\begin{equation*}
			\sup_{s\in [0,1]}\Tr(e^{-TQ_s^2})<\infty.
		\end{equation*}
		\item[(H2)] The operators $Q_s$ are defined on the same domain for 
		all $s \in [0,1]$ and for all $h \in \mathrm{dom}(Q_s)$,
		the assignment $s \mapsto Q_s h$ is continuously differentiable from 
		$[0,1]$ to $\HH$.
		Thus, the time derivative $\dot{Q}_s$ is densely defined on $\HH$.
		The operators $\dot{Q}_s\Delta_s^{-1/2}$ and
		$\Delta_s^{-1/2}\dot{Q}_s$, where $\Delta_s 
		= Q_s^2+\mathbf{1}$, are assumed to be bounded in a uniform way:
		\begin{equation*}
			\sup_{s\in [0,1]}\bigl\|\Delta_s^{-1/2}\dot{Q}_s\bigr\| +
			\sup_{s\in
				[0,1]}\bigl\|\dot{Q}_s\Delta_s^{-1/2}\bigr\|<\infty.
		\end{equation*}
		\item[(H3)] The map $s\mapsto
		\mathbf{c}^{s}(\theta)$ defines a continuously differentiable curve 
		for all $\theta\in \Omega$ if we equip $\Lop(\HH)$ with the strong 
		operator topology.
		Moreover, for each $\theta\in \Omega$ and $s \in [0,1]$, the
		operators $\dot{C}^s_\pm(\theta) \coloneqq
		\Delta_s^{\pm1/2}\dot{\mathbf{c}}^s(\theta)\Delta_s^{\mp1/2}$ are
		densely defined and bounded and the assignment $\theta \mapsto
		\dot{C}^s_\pm(\theta)$ is continuous from $\Omega$ to
		$\Lopclifq(\HH)$ in such a way that the seminorms can be chosen
		independently of $s$.
	\end{enumerate}
\end{defn}
\section{Odd \texorpdfstring{$\vartheta$}{ϑ}-summable Fredholm
	modules}\label{Section: odd FM}
In this section, we will introduce the notion of an \textit{odd}
$\vartheta$-summable Fredholm module over a locally convex dg algebra and see
that this induces a $\vartheta$-summable $\mathcal{C}_1$-Fredholm module in a
canonical way.
\begin{defn}\label{Definition odd FM}
	An \textit{odd $\vartheta$-summable Fredholm module} $\mathscr{M}$ over a
	locally convex dg algebra $\Omega$ is
	a triple $(\HH, \mathbf{c}, Q)$, where
	\begin{enumerate}
		\item $\HH$ is a Hilbert space;
		\item $\mathbf{c} \colon \Omega \to \Lop(\HH)$ is a continuous, linear
		operator such that $\mathbf{c}(\mathbf{1}) = \mathbf{1}$;
		\item $Q$ is an unbounded, self-adjoint operator on $\HH$,
	\end{enumerate}
	such that the following holds for all $f \in \Omega^0$ and $\theta \in
	\Omega$:
	\begin{equation}\label{Equations Fredholm}
		[Q, \mathbf{c}(f)] = \mathbf{c}(df), \quad \text{and} \quad
		\mathbf{c}(f \theta) =
		\mathbf{c}(f) \mathbf{c}(\theta), \quad \mathbf{c}(\theta f) =
		\mathbf{c}(\theta) \mathbf{c}(f).
	\end{equation}
	Furthermore, the above data are subject to analytic requirements:
	\begin{enumerate}[label=(A{{\arabic*}})]
		\item For all $\theta \in \Omega$, the operators $C_{\pm}(\theta)
		\coloneqq \Delta^{\pm 1/2} \mathbf{c}(\theta) \Delta^{\mp 1/2}$ are
		densely defined and bounded where $\Delta \coloneqq Q^2 + \mathbf{1}$
		and the map $\theta \mapsto C_{\pm}(\theta)$ is continuous from
		$\Omega$ to $\Lop(\HH)$;
		\item For all $T > 0$, one has $e^{-TQ^2} \in \Lop^1(\HH)$.
	\end{enumerate}
	If $\mathscr{M}$ satisfies all properties apart from \cref{Equations
	Fredholm}, it is called an \textit{odd $\vartheta$-summable weak Fredholm
	module over $\Omega$}.
\end{defn}
Suppose we are given an odd $\vartheta$-summable Fredholm module
$\mathscr{M}$.
Then, we may define a $\vartheta$-summable $\mathcal{C}_1$-Fredholm module
$\tilde{\mathscr{M}} \coloneqq (\tilde{\HH}, \tilde{\mathbf{c}},
\tilde{Q})$ as follows:
\begin{enumerate}
	\item Let $\tilde{\HH} \coloneqq \HH \oplus \HH$ with
	$\Z_2$-grading determined by $\tilde{\HH}^+ \coloneqq \HH \oplus
	\{0\}$,
	$\tilde{\HH}^- \coloneqq \{0\} \oplus \HH$;
	\item Let $\mathcal{C}_1$ act on $\tilde{\HH}$ by letting the
	generator
	$e_1$ act on $\tilde{\HH}$ via
	\begin{equation*}
		\gamma \coloneqq
		\begin{pmatrix}
			0 & \mathbf{1} \\
			-\mathbf{1} & 0
		\end{pmatrix};
	\end{equation*}
	\item
	\begin{equation*}
		\tilde{\mathbf{c}}(\theta) \coloneqq
		\begin{cases}
			\begin{pmatrix}
				\mathbf{c}(\theta) & 0 \\
				0 & \mathbf{c}(\theta)
			\end{pmatrix}, & \text{$\theta$ even} \\[15pt]
			\begin{pmatrix}
				0 & \mathbf{c}(\theta) \\
				\mathbf{c}(\theta) &0
			\end{pmatrix}, & \text{$\theta$ odd}
		\end{cases} \qquad \text{for $\theta \in \Omega$};
	\end{equation*}
	\item
	\begin{equation*}
		\tilde{Q} \coloneqq
		\begin{pmatrix}
			0 & Q \\
			Q & 0
		\end{pmatrix}.
	\end{equation*}
\end{enumerate}
\begin{rem}
	If $\mathscr{M}$ is an odd $\vartheta$-summable \textit{weak} Fredholm
	module, then $\tilde{\mathscr{M}}$ will be an even $\vartheta$-summable
	\textit{weak} Fredholm module.
\end{rem}
\begin{rem}\label{Doubling acyclic}
	Consider the acyclic extension $\Omega_{\mathbb{T}}$ of
	$\Omega$.
	As in \cref{Acyclic extension FM}, we may define the extended odd
	$\vartheta$-summable weak Fredholm module $\mathscr{M}_{\mathbb{T}} =
	(\HH,\mathbf{c}_{\mathbb{T}}, Q)$ over $\Omega_{\mathbb{T}}$ by setting
	\begin{equation*}
		\mathbf{c}_{\mathbb{T}}(\theta' + \sigma \theta'') \coloneqq
		\mathbf{c}(\theta').
	\end{equation*}
	In view of the preceding example, it is obvious that
	$\widetilde{\mathscr{M}_{\mathbb{T}}} =
	\tilde{\mathscr{M}}_{\mathbb{T}}$.
\end{rem}
\section{The Chern character}\label{Section: Chern character}
Similarly to \cite{GüneysuLudewig}, for a $\vartheta$-summable weak
$\mathcal{C}_q$-Fredholm module $\mathscr{M}$, we will construct an analytic
cochain $\Chern$ which we will call the \textit{Chern character of
	$\mathscr{M}$}.
\subsection{The quantization map}
Let $\mathscr{M} \coloneqq (\mathcal{H}, \mathbf{c}, Q)$ be a
$\vartheta$-summable weak $\mathcal{C}_q$-Fredholm module.
Similarly to \cite[Equation (4.1)]{GüneysuLudewig}, we define an
$\Lopclifq(\HH)$-valued bar cochain $\omega_{\mathscr{M}}$ via
\begin{equation*}
	\omega_{\mathscr{M}}^{(0)} \coloneqq Q, \quad
	\omega_{\mathscr{M}}^{(1)}(\theta) \coloneqq \mathbf{c}(\theta), \quad
	\omega_{\mathscr{M}}^{(N)}(\theta_1, \ldots, \theta_N) \coloneqq 0, N \geq
	2.
\end{equation*}
The cochain $\omega_{\mathscr{M}}$ is odd in the $\Z_2$-grading of
$\Hom(\mathrm{B}(\Omega), \Lop_{\mathcal{C}_q}(\HH))$.
Note that since $Q$ may be unbounded in general (hence not actually an
element of
$\Lop_{\mathcal{C}_q}(\HH)$), the above display is only formal.
We may interpret $\omega_{\mathscr{M}}$ as a (super-)connection form of the
(super-)connection $\nabla \coloneqq \delta \omega_{\mathscr{M}} +
\omega_{\mathscr{M}}$ and define its curvature via
\begin{equation*}
	F_{\mathscr{M}} \coloneqq \delta \omega_{\mathscr{M}} +
	\omega_{\mathscr{M}}^2.
\end{equation*}

Working out the components, $F_{\mathscr{M}}$ is explicitly given by
\begin{align}\label{Components curvature}
	F_{\mathscr{M}}^{(0)} & = Q^2, \notag\\
	F_{\mathscr{M}}^{(1)}(\theta_1) & = [Q, \mathbf{c}(\theta_1)] - \mathbf{c}(d
	\theta_1), \\
	F_{\mathscr{M}}^{(2)}(\theta_1, \theta_2) & = (-1)^{\lvert
		\theta_1 \rvert}(\mathbf{c}(\theta_1 \theta_2) - \mathbf{c}(\theta_1)
	\mathbf{c}(\theta_2)),\notag
\end{align}
with all higher arities being zero.
\begin{nota}
	Let $H$ be a non-negative operator and $A_1, \ldots, A_N$ suitable
	operators on a Hilbert space $\HH$.
	Define
	\begin{equation*}
		\{A_1, \ldots, A_N\}_{H} \coloneqq \int_{\Delta_N} e^{-\tau_1 H} A_1
		e^{-(\tau_2 - \tau_1)H} A_2 \cdot \ldots \cdot e^{-(\tau_{N} -
			\tau_{N-1})H}A_n e^{-(1 - \tau_N)H} \, \mathrm{d} \tau
	\end{equation*}
	where $\Delta_N\coloneqq \{\tau \in \R^N \mid 0 \leq \tau_1 \leq \ldots
	\leq \tau_N \leq 1\}$ is the standard simplex.
\end{nota}
\cite[Lemma 4.2]{GüneysuLudewig} shows that the components of the curvature
$F_{\mathscr{M}}$ satisfy the necessary analytic assumptions such that the
following definition of the \textit{quantization map} makes sense:
\begin{defn}\label{Defn Quantization}
	For $T>0$ the even $\Lop_{\mathcal{C}_q}(\HH)$-valued bar cochain 
	$\Phi^{\mathscr{M}}_T$ is defined as
	\begin{equation*}
		\Phi^{\mathscr{M}}_T \coloneqq \sum_{N=0}^\infty (-T)^N
		\bigl\{\underbrace{F^{\geq 1}_{\mathscr{M}}, \dots, F^{\geq
		1}_{\mathscr{M}}}_N\bigr\}_{TQ^2}.
	\end{equation*}
\end{defn}
We recall:
\begin{nota}
	Let $\mathcal{P}_{M, N}$ be the set of ordered partitions of length $M$ 
	of $\{1, \ldots, N\}$; that is, $M$-tuples of nonempty subsets
	$I = (I_1, \ldots, I_M)$ for which $I_1 \cup \ldots \cup I_M = \{1, \dots,
	N\}$ and for which every member of $I_a$ is smaller than any member of
	$I_b$ if $a < b$.
\end{nota}
For $N \geq 1$, we have the formula (see \cite[Eq.
(4.8)]{GüneysuLudewig})
\begin{equation*}
	\Phi^{\mathscr{M}}_T(\theta_1, \dots, \theta_N) = \sum_{M=1}^N (-T)^M
	\sum_{I \in
		\mathscr{P}_{M, N}} \bigl\{ F_{\mathscr{M}}(\theta_{I_1}), \dots,
	F_{\mathscr{M}}(\theta_{I_M})\bigr\}_{TQ^2},
\end{equation*}
where $\theta_{I_a}:= (\theta_{i+1} , \dots , \theta_{i+m})$
if $I_a = \{j \mid i < j \leq i+m\}$ for some $i, m$.
For $N = 0$ we have $(\Phi_T^{\mathscr{M}})^{(0)} = e^{-TQ^2}$.
\begin{rem}\label{Remark isomorphism quantm}
	\begin{enumerate}
		\item We have that $\Phi \in \Hom^+(\mathrm{B}(\Omega),
		\Lop_{\mathcal{C}_q}(\HH))$, i. e. it maps even elements of
		$\mathrm{B}(\Omega)$ to even operators on $\HH$ and odd elements to
		odd operators, since $F_{\mathscr{M}}$ is even (because $\omega$ is
		odd).
		\item\label{Item 3} If $U \colon \mathscr{M}_1 \to \mathscr{M}_2$ is
		an isomorphism
		of $\vartheta$-summable weak $\mathcal{C}_q$-Fredholm modules, it is
		easy to check that
		\begin{equation*}
			U \Phi_T^{\mathscr{M}_1}(\theta) = \Phi_T^{\mathscr{M}_2}(\theta)
			U
		\end{equation*}
		for all $\theta \in \mathrm{B}(\Omega)$.
	\end{enumerate}
\end{rem}
As in \cite[Theorem 4.6]{GüneysuLudewig} one proves the following fundamental
estimate:
\begin{thm}\label{Fundamental estimate}
	The operator $\Phi_{T}^{\mathscr{M}}(\theta_1, \ldots, \theta_N)$ is 
	well-defined and trace-class for all $T > 0$ and $\theta_1, \ldots, 
	\theta_N \in \Omega$.
	There exists a continuous seminorm $\nu$ on $\Omega$, which is independent
	of $T$, such that
	\begin{equation*}
		\lVert \Phi_{T}^{\mathscr{M}}(\theta_1, \ldots, \theta_N) \rVert_1
		\leq
		e^{T/2} \Tr(e^{-TQ^2/2}) \frac{T^{N/2}}{\lfloor N/2 \rfloor!}
		\nu(\theta_1) \cdot \ldots \cdot \nu(\theta_N).
	\end{equation*}
	Moreover, the same holds for $Q \Phi_{T}^{\mathscr{M}}$ and
	$\Phi_T^{\mathscr{M}} Q$ instead of $\Phi_T^{\mathscr{M}}$ if we replace
	$T^{N/2}$ with $T^{(N-1)/2}$ and $\lfloor N/2\rfloor!$ with $\lfloor
	(N-1)/2 \rfloor!$ on the right-hand side.
\end{thm}
\subsection{Definition of \texorpdfstring{$\Chern$}{ChernM}}
\begin{defn}
	Let $\HH$ be a Hilbert module over $\mathcal{C}_q$ and $A \in
	\Lopclifqtrace(\HH)$.
	The \textit{Clifford supertrace of $A$} is defined as
	\begin{equation*}
		\CStr(A) \coloneqq 2^{-q}\Str(\gamma A)
	\end{equation*}
	where $\gamma \coloneqq e_1 \cdot \ldots \cdot e_q$ is the Clifford volume
	element.
\end{defn}
The following lemma justifies the name \enquote{Clifford supertrace}:
\begin{lem}\label{Clifford trace is trace}
	Let $\HH$ be a Hilbert module over $\mathcal{C}_q$ and $A, B \in
	\Lopclifqtrace(\HH)$.
	Then one has
	\begin{equation*}
		\CStr([A, B]) = 0.
	\end{equation*}
\end{lem}
With the above definitions in place, we may define the Chern character:
\begin{defn}
	Let $\mathscr{M}$ be a $\vartheta$-summable weak $\mathcal{C}_q$-Fredholm
	module.
	The \textit{Chern character of $\mathscr{M}$} is the cochain $\Chern \in
	\Hom(\mathrm{C}(\Omega), \C)$ defined via
	\begin{equation*}\label{Definition Chern character of FM}
		\Chern(\theta_0, \ldots, \theta_N) \coloneqq
		\CStr(\mathbf{c}(\theta_0)
		\Phi_1^{\mathscr{M}}(\theta_1, \ldots, \theta_N))
	\end{equation*}
	for $\theta_0, \ldots, \theta_N \in \Omega$.
\end{defn}
\begin{rem}
	For $q = 0$ this coincides with \cite[Definition 5.1]{GüneysuLudewig}.
\end{rem}
\begin{lem}\label{Chern even/odd}
	$\Chern \in \Hom^+(\mathrm{C}(\Omega), \C)$ for $q$ even and $\Chern \in
	\Hom^-(\mathrm{C}(\Omega), \C)$ for $q$ odd.
\end{lem}
\begin{proof}
	For $q$ even we have to prove that $\Chern$ is even, i. e. it vanishes on
	odd elements of $\mathrm{C}(\Omega)$.
	Let $(\theta_0, \ldots, \theta_N) \in \mathrm{C}(\Omega)$ be an odd
	element.
	There are two cases to consider:
	\begin{enumerate}
		\item $\theta_0$ is even in $\Omega$.
		By definition of the gradings on $\mathrm{C}(\Omega)$ and
		$\mathrm{B}(\Omega)$ this implies that $(\theta_1, \ldots, \theta_N)$
		is odd in $\mathrm{B}(\Omega)$.
		Since $\mathbf{c}$ and $\Phi_1^{\mathscr{M}}$ are parity preserving,
		it
		follows that
		\begin{equation*}
			\mathbf{c}(\theta_0) \Phi_1^{\mathscr{M}}(\theta_1, \ldots,
			\theta_N)
		\end{equation*}
		is an odd operator on $\HH$.
		Thus, as $\gamma$ is even (because $q$ is), $\gamma
		\mathbf{c}(\theta_0)\Phi_1^{\mathscr{M}}(\theta_1, \ldots, \theta_N)$
		is an odd operator on which the usual supertrace on $\HH$ vanishes.
		\item $\theta_0$ is odd in $\Omega$.
		In this case, the same argument shows that $(\theta_1, \ldots,
		\theta_N)$ must be even in $\mathrm{B}(\Omega)$ and the above proof
		goes through in a similar fashion.
	\end{enumerate}
	The case in which $q$ is odd may be proven analogously.
\end{proof}
\begin{lem}\label{Isomorphism Chern character}
	If $\mathscr{M}_1$ and $\mathscr{M}_2$ are isomorphic
	$\vartheta$-summable weak $\mathcal{C}_q$-Fredholm modules, then
	$\mathrm{Ch}_{\mathscr{M}_1} = \mathrm{Ch}_{\mathscr{M}_2}$.
\end{lem}
\begin{proof}
	This follows from \crefdefpart{Remark isomorphism quantm}{Item
		3} and the fact that the isomorphism intertwines the
	$\mathcal{C}_q$-actions.
\end{proof}
We collect some of the most important properties of the Chern character:
\begin{thm}\label{Thm ChM}
	Let $\mathscr{M}$ be a $\vartheta$-summable weak $\mathcal{C}_q$-Fredholm
	module.
	Then the following assertions hold:
	\begin{enumerate}
		\item The cochain $\Chern$ is analytic.
		\item The Chern character is coclosed, that is $(\underline{d} +
		\underline{b} - \underline{B})^\vee \Chern = 0$.
		\item If $\mathscr{M}_\T$ denotes the acyclic extension of
		$\mathscr{M}$, then $\ChernT$ is Chen normalized, i. e. it vanishes
		on the subcomplex $\mathrm{D}^{\mathbb{T}}(\Omega)$ defined in
		\cref{Section: chen normalization}.
		\item For $\mathscr{M}_s$, $s \in [0,1]$, a homotopy of
		$\vartheta$-summable $\mathcal{C}_q$-Fredholm modules
		the Chern characters $\mathrm{Ch}_{\mathscr{M}^0_\T}$ and
		$\mathrm{Ch}_{\mathscr{M}^1_\T}$ are homologous in
		$\mathrm{N}_{\T, \alpha}(\Omega)$.
	\end{enumerate}
\end{thm}
\subsubsection{Proof of \texorpdfstring{\cref{Thm ChM}}{Theorem 5.10}, Part 1}
Analyticity of $\Chern$ follows from the fundamental estimate in
\cref{Fundamental estimate} together with the fact that
\begin{equation*}
	\lVert AB \rVert_1 \leq \lVert A \rVert \cdot \lVert B \rVert_1
\end{equation*}
for bounded operators $A$ and $B$ with $B$ trace-class.\qed
\subsubsection{Proof of \texorpdfstring{\cref{Thm ChM}}{Theorem 5.10}, Part 2}
We will adapt the proof of \cite[Theorem 5.3]{GüneysuLudewig} to
our more general setting and highlight only the main differences.

The inclusion $\Omega \subseteq \Omega_{\mathbb{T}}$ gives rise to a linear 
map
\begin{equation*}
	j \colon \mathrm{C}(\Omega) \to \mathrm{C}(\Omega_{\mathbb{T}})
\end{equation*}
which commutes with all differentials since $\Omega \subseteq 
\Omega_{\mathbb{T}}$ is a subalgebra and $d_{\mathbb{T}}$ restricts to $d$ on 
$\Omega$.
Observe that by definition of the acyclic extension of $\mathscr{M}$, we have
$\Chern = j^*\ChernT$ so that it suffices to prove coclosedness of $\ChernT$.

The maps $\Phi_1^{\mathscr{M}_{\mathbb{T}}}$ and
$F_{\mathscr{M}_{\mathbb{T}}}$ descend to the quotient
$\mathrm{B}(\underline{\Omega_{\mathbb{T}}})$ which is defined as in \cref{Bar
construction} with the graded vector space $\underline{\Omega_{\mathbb{T}}}$
instead of the dg algebra $\Omega_{\mathbb{T}}$; the quotient map of
$\Phi_1^{\mathscr{M}_{\mathbb{T}}}$ will be denoted by
$\underline{\Phi}_1^{\mathscr{M}_{\mathbb{T}}}$.
Using the explicit formulas from \cref{Components curvature} we
have
\begin{align*}
	F_{\mathscr{M}_{\mathbb{T}}}^{(0)} & = Q^2, \\
	F_{\mathscr{M}_{\mathbb{T}}}^{(1)}(\theta_1) & =
	[Q, \mathbf{c}(\theta_1')] - \mathbf{c}(d
	\theta_1') + \mathbf{c}(\theta''), \\
	F_{\mathscr{M}_{\mathbb{T}}}^{(2)}(\theta_1, \theta_2) & =
	(-1)^{\lvert
		\theta_1 \rvert}(\mathbf{c}(\theta_1' \theta_2') -
	\mathbf{c}(\theta_1')\mathbf{c}(\theta_2'))
\end{align*}
for $\theta_1, \theta_2 \in \Omega_{\mathbb{T}}$.
Let us introduce some notation:
define
\begin{equation*}
	\alpha \colon \mathrm{C}(\Omega_{\mathbb{T}}) \to
	\mathrm{B}(\underline{\Omega_{\mathbb{T}}}), \quad (\theta_0, \ldots,
	\theta_N) \mapsto \underline{\mathbf{N}}(\sigma \theta_0, \theta_1,
	\ldots,
	\theta_N)
\end{equation*}
to be the parity preserving map where
\begin{equation}\label{Definition alpha}
	\underline{\mathbf{N}} \colon \mathrm{B}(\underline{\Omega_{\mathbb{T}}})
	\to \mathrm{B}(\underline{\Omega_{\mathbb{T}}})
\end{equation}
is the quotient map of
\begin{equation*}
	\mathbf{N} \colon \mathrm{B}(\Omega_{\mathbb{T}}) \to
	\mathrm{B}(\Omega_{\mathbb{T}}), \quad (\theta_1, \ldots, \theta_N)
	\mapsto
	\sum_{k = 1}^N (-1)^{n_k(n_N - n_k)} (\theta_{k+1}, \ldots, \theta_N,
	\theta_1, \ldots, \theta_k)
\end{equation*}
and $n_k = \lvert \theta_1 \rvert + \ldots + \lvert \theta_k \rvert - k$.
\begin{lem}\label{Ch pullback}
	The Chern character of the acyclic extension satisfies
	\begin{equation*}
		\mathrm{Ch}_{\mathscr{M}_{\mathbb{T}}} = - \alpha^*
		\CStr(\underline{\Phi}_1^{\mathscr{M}_{\mathbb{T}}}).
	\end{equation*}
\end{lem}
\begin{proof}
	If $N = 0$, the statement is obvious.
	For $N \geq 1$, we calculate
	\begin{align*}
		& \alpha^*
		\CStr(\underline{\Phi}_1^{\mathscr{M}_{\mathbb{T}}})
		(\theta_0, \ldots, \theta_N) \\
		& \qquad = \sum_{k = 1}^{N+1} (-1)^{m_{k-1}(m_N - m_{k-1})}\CStr(
		\underline{\Phi}_1^{\mathscr{M}_{\mathbb{T}}}(\theta_k, \ldots,
		\theta_N, \sigma \theta_0, \theta_1, \ldots, \theta_{k-1}))
	\end{align*}
	where $m_k = \lvert \theta_0 \rvert + \ldots +
	\lvert \theta_k \rvert - k$.
	Indeed, the signs in the above equation follow from
	\begin{equation*}
		n_k(\sigma \theta_0, \ldots, \theta_N) = \lvert \sigma \theta_0 \rvert
		+ \lvert \theta_1 \rvert + \ldots + \lvert \theta_{k-1} \rvert - k =
		m_{k-1} \mod 2.
	\end{equation*}
	Using the explicit formulas for the curvature, one notices that
	\begin{equation*}
		F_{\mathscr{M}_{\mathbb{T}}}(\theta_N, \sigma\theta_0) =
		F_{\mathscr{M}_{\mathbb{T}}}(\sigma \theta_0, \theta_1) = 0
		\qquad \text{and} \qquad
		F_{\mathscr{M}_{\mathbb{T}}}(\sigma\theta_0) = \mathbf{c}(\theta_0').
	\end{equation*}
	Hence, expanding $\underline{\Phi}_1^{\mathscr{M}_{\mathbb{T}}}$ in
	terms of the brackets yields
	\begin{equation*}
		\sum_{k=1}^{N+1} (-1)^{m_{k-1}(m_N-m_{k-1})} \sum_{M=1}^N (-1)^{M+1}
		\sum_{l=1}^{M+1} \sum_{I \in \mathcal{P}_{M, N}^{k, l}}
		\begin{aligned}
			\CStr &\bigl\{F_{\mathscr{M}_{\mathbb{T}}}
			(\theta_{I_{l}}), \dots \\
			& \dots, F_{\mathscr{M}_{\mathbb{T}}}(\theta_{I_M}),
			\mathbf{c}(\theta_0^\prime),
			F_{\mathscr{M}_{\mathbb{T}}}(\theta_{I_1}), \dots \\
			&~~~~~~~~~~~~~~~~~~~~~\dots,
			F_{\mathscr{M}_{\mathbb{T}}}(\theta_{I_{l-1}})
			\bigr\}_{Q^2},
		\end{aligned}
	\end{equation*}
	where $\mathcal{P}^{k, l}_{M, N} \subseteq \mathcal{P}_{M, N}$ are those 
	partitions for which $I_1, \dots, I_{\ell-1} \subseteq \{1, \dots,
	k-1\}$ and $I_{l}, \dots, I_M \subseteq \{k, \dots, N\}$.
	Let $I \in \mathcal{P}^{k, l}_{M, N}$ such that the bracket in the above
	display is non-zero and let $1 \leq j \leq l-1$.
	On the one hand, if $\theta_{I_j} = (\theta_{i_j}, \theta_{i_{j}+1})$, we
	have
	\begin{align*}
		\lvert F_{\mathscr{M}_{\mathbb{T}}}(\theta_{I_j}) \rvert & =
		\lvert \mathbf{c}(\theta_{i_j}' \theta_{i_j + 1}') -
		\mathbf{c}(\theta_{i_j}')\mathbf{c}(\theta_{i_j + 1}') \rvert =
		\lvert \theta_{i_j}' \rvert + \lvert \theta_{i_j + 1}' \rvert \\
		& = \lvert \theta_{i_j} \rvert + \lvert \theta_{i_j + 1} \rvert - 2
		\quad \mod 2,
	\end{align*}
	while on the other hand, if $\theta_{I_j} = \theta_{i_j}$, we have
	\begin{equation*}
		\lvert F_{\mathscr{M}_{\mathbb{T}}}(\theta_{I_J}) \rvert =
		\lvert [Q, \mathbf{c}(\theta_{i_j}')] -
		\mathbf{c}(d
		\theta_{i_j}') + \mathbf{c}(\theta_{i_j}'') \rvert = \lvert
		\theta_{i_j}' \rvert + 1 = \lvert \theta_{i_j} \rvert - 1 \mod 2.
	\end{equation*}
	Thus, we have
	\begin{equation*}
		m_{k-1} = \lvert \theta_0' \rvert + \lvert
		F_{\mathscr{M}_{\mathbb{T}}}(\theta_{I_1}) \rvert + \ldots +
		\lvert F_{\mathscr{M}_{\mathbb{T}}}(\theta_{I_{l-1}}) \rvert
		\eqqcolon \tilde{m}_{l-1}
	\end{equation*}
	modulo two for $1 \leq k \leq N + 1$.
	Note that for all $l$ and $M \leq N$, the sets $\mathcal{P}_{M,
	N}^{k, l}$ and $\mathcal{P}_{M,
	N}^{k', l}$ are disjoint for $k \neq k'$ and their union over $k$ is  
	$\mathcal{P}_{M, N}$.
	Thus, the above sums over $k$ and $\mathcal{P}_{M, N}^{k, l}$ may be 
	written as one sum over $\mathcal{P}_{M, N}$, yielding
	\begin{equation}\label{Chern pullback long sum}
		- \sum_{M=1}^N (-1)^M \sum_{I \in \mathcal{P}_{M, N}} \sum_{l=1}^{M+1}
		(-1)^{\tilde{m}_{l-1}(\tilde{m}_M - \tilde{m}_{l-1})}
		\begin{aligned}
			\CStr
			&\bigl\{F_{\mathscr{M}_{\mathbb{T}}}(\theta_{I_{l}}), \dots
			\\
			& \dots, F_{\mathscr{M}_{\mathbb{T}}}(\theta_{I_M}),
			\mathbf{c}(\theta_0^\prime),
			F_{\mathscr{M}_{\mathbb{T}}}(\theta_{I_1}), \dots \\
			&~~~~~~~~~~~~~~~~~~~~~\dots,
			F_{\mathscr{M}_{\mathbb{T}}}(\theta_{I_{l-1}})
			\bigr\}_{Q^2}.
		\end{aligned}
	\end{equation}
	To simplify the above sum, we need the following
	\begin{lem}\label{Clifford trace cyclic sum property}
		Let $\HH$ be a Hilbert module over $\mathcal{C}_q$, $H$ a
		non-negative self-adjoint even
		operator on $\HH$ and $A_0, \ldots, A_N$ as in \cite[Lemma
		4.2]{GüneysuLudewig} with
		the extra condition that $H$ and the $A_j$ supercommute with the
		Clifford action.
		Then, the Clifford supertrace has the following cyclic property:
		\begin{equation*}
			\sum_{j = 0}^N (-1)^{k_j}\CStr(\{A_{j+1}, \ldots, A_N, A_0,
			\ldots,
			A_j\}_H) =
			\CStr(A_0 \{A_1, \ldots, A_N\}_H)
		\end{equation*}
		where $k_j \coloneqq (\lvert A_0 \rvert + \ldots + \lvert A_j
		\rvert)(\lvert A_{j+1} \rvert + \ldots + \lvert A_N \rvert)$.
	\end{lem}
	\begin{proof}
		As a consequence of \cref{Clifford trace is trace} we have
		\begin{align*}
			& \CStr\big([A_0 e^{-\tau_1 H}A_1 e^{-(\tau_2 - \tau_1)H} \cdot
			\ldots
			\cdot
			e^{-(\tau_j
				- \tau_{j-1})H} A_j e^{-(\tau_{j+1} - \tau_j)H}, \\
			& \qquad A_{j+1} e^{-(\tau_{j+2} - \tau_{j+1}) H}A_{j+2}
			e^{-(\tau_{j+3} - \tau_{j+2})H} \cdot \ldots \cdot e^{-(\tau_N
				- \tau_{N-1})H} A_N e^{-(1-\tau_N)H}]\big) = 0.
		\end{align*}
		Thus, after a substitution in $\Delta_N$, we have
		\begin{equation*}
			\CStr(A_0 \{A_1, \ldots, A_N\}_H) = (-1)^{k_j}\CStr(A_{j+1}
			\{A_{j+2},
			\ldots,
			A_N, A_0, \ldots, A_j\}_H)
		\end{equation*}
		for any $0 \leq j \leq N-1$ and in particular,
		\begin{equation}\label{Trace cyclic with 1}
			\CStr(A_0 \{A_1, \ldots, A_j, \mathbf{1}, A_{j+1}, \ldots,
			A_N\}_H) =
			(-1)^{k_j}\CStr(\{A_{j+1}, \ldots, A_N,
			A_0, \ldots, A_j\}_H).
		\end{equation}
		\begin{claim}
			For $0 \leq j \leq N$ we have
			\begin{equation*}
				\{A_1, \ldots, A_j, \mathbf{1}, A_{j+1}, \ldots, A_N\}_H =
				\int_{\Delta_N} (\tau_{j+1} - \tau_j) e^{-\tau_1 H}
				\prod_{j=1}^{N}
				A_j e^{-(\tau_{j+1} - \tau_j)H} \, \mathrm{d} \tau
			\end{equation*}
			where $\tau_{N+1} = 1$ and $\tau_0 = 0$.
		\end{claim}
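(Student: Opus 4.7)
The plan is to unfold the left-hand side as an integral over the $(N{+}1)$-simplex, observe that the inserted $\mathbf{1}$ simply allows the two exponentials on either side of it to fuse into a single one, and then perform an inner integration that produces the claimed factor $(\tau_{j+1} - \tau_j)$.

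Concretely, I would first rename the extra simplex variable that corresponds to the slot occupied by $\mathbf{1}$ and write
\begin{equation*}
\{A_1, \ldots, A_j, \mathbf{1}, A_{j+1}, \ldots, A_N\}_H = \int_{\Delta_{N+1}} e^{-\tau_1 H} A_1 \cdots A_j \, e^{-(\sigma - \tau_j)H}\, \mathbf{1}\, e^{-(\tau_{j+1} - \sigma)H} A_{j+1} \cdots A_N \, e^{-(1-\tau_N)H}\, d\tau_1 \cdots d\sigma \cdots d\tau_N,
\end{equation*}
where $\sigma$ is the inserted variable constrained by $\tau_j \leq \sigma \leq \tau_{j+1}$ (using the conventions $\tau_0 = 0$, $\tau_{N+1} = 1$ for the boundary cases $j=0$ and $j=N$). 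Since $e^{-(\sigma - \tau_j)H}\mathbf{1}\, e^{-(\tau_{j+1}-\sigma)H} = e^{-(\tau_{j+1} - \tau_j)H}$, the integrand is independent of $\sigma$.

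Applying Fubini to integrate out $\sigma$ over $[\tau_j, \tau_{j+1}]$ yields the factor $(\tau_{j+1} - \tau_j)$ in front of the remaining integrand, which is now a function on $\Delta_N$ of the desired form. This gives exactly the right-hand side of the claim.

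The argument is essentially a direct computation; no genuine obstacle arises, but I would take care with the boundary cases $j = 0$ and $j = N$ (where the inserted $\sigma$ has only one-sided constraints from $\tau_0 = 0$ or $\tau_{N+1} = 1$), and with the trace-class / domain issues needed to apply Fubini, which are guaranteed by the hypotheses inherited from \cite[Lemma 4.2]{GüneysuLudewig}.
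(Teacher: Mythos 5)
Your proposal is correct, and it is in fact a cleaner route than the one taken in the paper. Both arguments begin identically: unfold the left-hand side as an integral over $\Delta_{N+1}$ and use the semigroup property to fuse the two heat factors adjacent to the inserted $\mathbf{1}$ into $e^{-(\tau_{j+1}-\tau_j)H}$. At that point the paper writes the simplex integral in iterated form and extracts the factor $(\tau_{j+1}-\tau_j)$ through a somewhat lengthy integration by parts in the redundant variable (differentiating the nested inner integral, splitting it into two terms, and recombining), whereas you simply observe that after fusing, the integrand no longer depends on the slack variable $\sigma$ constrained by $\tau_j \leq \sigma \leq \tau_{j+1}$, so integrating it out by Fubini immediately produces the length $(\tau_{j+1}-\tau_j)$ of its range; the remaining integration is over $\Delta_N$ and gives the right-hand side, with the boundary cases $j=0$ and $j=N$ handled by the conventions $\tau_0=0$, $\tau_{N+1}=1$ exactly as you note. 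The justification of Fubini is unproblematic since the integrand is continuous in trace norm on the compact simplex under the hypotheses of \cite[Lemma 4.2]{GüneysuLudewig}. So what your approach buys is brevity and transparency (the geometric origin of the factor $(\tau_{j+1}-\tau_j)$ as the measure of the fibre over which $\sigma$ ranges is made explicit), while the paper's integration-by-parts computation yields the same identity with more bookkeeping and no additional generality.
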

		\begin{proof}[Proof of Claim]
			This follows from exploiting the semigroup property together with
			integration by parts:
			\footnotesize
			\begin{alignat*}{1}
				& \{A_1, \ldots, A_j, \mathbf{1}, A_{j+1}, \ldots, A_N\}_H \\
				& \qquad =
				\int_{\Delta_{N+1}} e^{-\tau_1 H}A_1
				\cdot \ldots \cdot A_j e^{-(\tau_{j+1} - \tau_j)H}
				\mathbf{1}
				e^{-(\tau_{j+2} - \tau_{j+1})H} A_{j+1} \cdot \ldots \cdot
				A_{N+1} e^{-(1 -
					\tau_{N+1})H} \, \dint \tau \\
				& \qquad =
				\int_{\Delta_{N+1}} e^{-\tau_1 H}A_1
				\cdot \ldots \cdot A_j
				e^{-(\tau_{j+2} - \tau_{j})H} A_{j+1} \cdot \ldots \cdot
				A_{N+1}
				e^{-(1 - \tau_{N+1})H} \, \dint \tau \\
				& \qquad = \int_0^1 \int_{\tau_1}^{1} \ldots
				\int_{\tau_{j-1}}^{1}
				\int_{\tau_{j+1}}^{1} \ldots \int_{\tau_{N}}^{1} \ldots \\
				& \qquad ~~~~~~~~~~~~~ \ldots
				\int_{\tau_{j-1}}^{\tau_{j+1}} \ldots \cdot A_j
				e^{-(\tau_{j+2} - \tau_{j})H} A_{j+1} \cdot \ldots \, \dint
				\tau_j \, \dint \tau_{N+1} \, \ldots
				\dint \tau_{j+2} \, \dint \tau_{j+1} \, \ldots \, \dint
				\tau_1.
			\end{alignat*}
			\normalsize
			Consider the inner integral:
			\footnotesize
			\begin{align}\label{Inner integral simplified}
				& \int_{\tau_{j-1}}^{1}
				\underbrace{\vphantom{\left(\int_{\tau_j}^1\right)}\mathbf{1}}_
				{f'(\tau_{j+1})} \cdot \underbrace{\left(
					\int_{\tau_{j+1}}^{1} \ldots \int_{\tau_{N}}^{1}
					\int_{\tau_{j-1}}^{\tau_{j+1}} \ldots \cdot A_j
					e^{-(\tau_{j+2} - \tau_{j})H} A_{j+1} \cdot \ldots \,
					\dint
					\tau_j \, \dint \tau_{N+1} \, \ldots
					\dint \tau_{j+2} \right)}_{g(\tau_{j+1})} \, \dint
				\tau_{j+1}
				\notag \\
				& \qquad = f(\tau_{j+1})g(\tau_{j+1})\Big \vert_{\tau_{j+1} =
					\tau_{j-1}}^{\tau_{j+1} = 1} - \int_{\tau_{j-1}}^1
				f(\tau_{j+1}) g'(\tau_{j+1}) \, \dint \tau_{j+1} \notag \\
				& \qquad = 0 - \int_{\tau_{j-1}}^1 \tau_{j+1} g'(\tau_{j+1})
				\,
				\dint \tau_{j+1}.
			\end{align}
			\normalsize
			We note that $g$ is of the form
			\begin{equation*}
				g(\tau_{j+1}) = \int_{\tau_{j+1}}^1 \tilde{g}(\tau_{j+1},
				\tau_{j+2}) \, \dint
				\tau_{j+2}
			\end{equation*}
			so that
			\begin{equation*}
				g'(\tau_{j+1}) = -\tilde{g}(\tau_{j+1}, \tau_{j+1}) +
				\int_{\tau_{j+1}}^1 \frac{\partial}{\partial x_1}
				\tilde{g}(\tau_{j+1}, \tau_{j+2}) \, \dint \tau_{j+2}.
			\end{equation*}
			The first summand equals
			\footnotesize
			\begin{equation*}
				-\int_{\tau_{j+1}}^1 \ldots \int_{\tau_{N}}^1
				\int_{\tau_{j-1}}^{\tau_{j+1}} \ldots \cdot A_j
				e^{-(\tau_{j+1} - \tau_{j})H} A_{j+1} e^{-(\tau_{j+3} -
					\tau_{j+1})H} \cdot \ldots \, \dint
				\tau_j \, \dint \tau_{N+1} \, \ldots
				\dint \tau_{j+3}
			\end{equation*}
			\normalsize
			whereas the second equals
			\footnotesize
			\begin{equation*}
				\int_{\tau_{j+1}}^1 \int_{\tau_{j+2}}^1 \ldots
				\int_{\tau_{N}}^1 \ldots \cdot A_{j-1}e^{-(\tau_{j+1} -
					\tau_{j-1})H} A_j e^{-(\tau_{j+2} - \tau_{j+1})H}A_{j+1}
				\cdot
				\ldots
				\, \dint \tau_{N+1} \, \ldots \, \dint \tau_{j+2}.
			\end{equation*}
			\normalsize
			If we plug this into \cref{Inner integral simplified}, we get
			\footnotesize
			\begin{align*}
				\ldots & = \int_{\tau_{j-1}}^1 \tau_{j+1}\int_{\tau_{j+1}}^1
				\ldots
				\int_{\tau_{N}}^1
				\int_{\tau_{j-1}}^{\tau_{j+1}} \ldots \cdot A_{j+1}
				e^{-(\tau_{j+3} -
					\tau_{j+1})H} \cdot \ldots \, \dint
				\tau_j \, \dint \tau_{N+1} \, \ldots
				\dint \tau_{j+3} \, \dint \tau_{j+1} \\
				& \qquad - \int_{\tau_{j-1}}^1 \tau_{j+1} \int_{\tau_{j+1}}^1
				\int_{\tau_{j+2}}^1 \ldots
				\int_{\tau_{N}}^1 \ldots \cdot A_{j-1}e^{-(\tau_{j+1} -
					\tau_{j-1})H} \cdot \ldots
				\, \dint \tau_{N+1} \, \ldots \, \dint \tau_{j+2} \, \dint
				\tau_{j+1} \\
				& = \int_{\tau_{j-1}}^1 \tau_{j+1}\int_{\tau_{j+1}}^1
				\ldots
				\int_{\tau_{N-1}}^1
				\int_{\tau_{j-1}}^{\tau_{j+1}} \ldots \cdot A_{j+1}
				e^{-(\tau_{j+2} -
					\tau_{j+1})H} \cdot \ldots \, \dint
				\tau_j \, \dint \tau_{N} \, \ldots
				\dint \tau_{j+2} \, \dint \tau_{j+1} \\
				& \qquad - \int_{\tau_{j-1}}^1 \tau_{j} \int_{\tau_{j}}^1
				\int_{\tau_{j+1}}^1 \ldots
				\int_{\tau_{N-1}}^1 \ldots \cdot A_{j-1}e^{-(\tau_{j} -
					\tau_{j-1})H} \cdot \ldots
				\, \dint \tau_{N} \, \ldots \, \dint \tau_{j+1} \, \dint
				\tau_{j} \\
				& = \int_{\tau_{j-1}}^1 \int_{\tau_j}^1
				\tau_{j+1}\int_{\tau_{j+1}}^1
				\ldots \int_{\tau_{N-1}}^1 \ldots \cdot A_{j+1}
				e^{-(\tau_{j+2}
					- \tau_{j+1})H} \cdot \ldots \, \dint \tau_{N} \, \ldots
				\, \dint \tau_{j+1} \, \dint \tau_j \\
				& \qquad - \int_{\tau_{j-1}}^1 \tau_{j} \int_{\tau_{j}}^1
				\int_{\tau_{j+1}}^1 \ldots
				\int_{\tau_{N-1}}^1 \ldots \cdot A_{j-1}e^{-(\tau_{j} -
					\tau_{j-1})H} \cdot \ldots
				\, \dint \tau_{N} \, \ldots \, \dint \tau_{j+1} \, \dint
				\tau_{j}
			\end{align*}
			\normalsize
			Taking the outer integral over this expression, we finally have
			\begin{equation}\label{Pulling out 1}
				\{A_1, \ldots, A_j, \mathbf{1}, A_{j+1}, \ldots, A_N\}_H =
				\int_{\Delta_N} (\tau_{j+1} - \tau_j) e^{-\tau_1 H}
				\prod_{j=1}^{N}
				A_j e^{-(\tau_{j+1} - \tau_j)H} \, \mathrm{d} \tau
			\end{equation}
			as desired.
		\end{proof}
		Since $\sum_{j=0}^{N} (\tau_{j+1} - \tau_j) = 1$, this yields
		\begin{equation*}
			\sum_{j = 0}^N \{A_1, \ldots, A_j, \mathbf{1}, A_{j+1}, \ldots,
			A_N\}_H
			= \{A_1, \ldots, A_N\}_H
		\end{equation*}
		which in combination with \cref{Trace cyclic with 1} proves the
		statement.
	\end{proof}
	Since all operators in question supercommute with the Clifford action,
	we may apply \cref{Clifford trace cyclic sum property} to \cref{Chern
		pullback long sum} which yields
	\begin{equation*}
		\ldots = - \CStr(\mathbf{c}(\theta_0')
		\underline{\Phi}_1^{\mathscr{M}_{\mathbb{T}}}(\theta_1, \ldots,
		\theta_N)) = -
		\mathrm{Ch}_{\mathscr{M}_{\mathbb{T}}}(\theta_0,
		\ldots, \theta_N)
	\end{equation*}
	which proves the lemma.
\end{proof}
By definition of the graded dual of a map and due to \cref{Chern even/odd},
we have
\begin{equation*}
	(\underline{d}_{\mathbb{T}} + \underline{b} - \underline{B})^{\vee}
	\mathrm{Ch}_{\mathscr{M}_{\mathbb{T}}} = (-1)^q
	\mathrm{Ch}_{\mathscr{M}_{\mathbb{T}}} \circ
	(\underline{d}_{\mathbb{T}} + \underline{b} - \underline{B})
\end{equation*}
where $\underline{d}_{\mathbb{T}} \coloneqq \underline{d_{\T}}$ is the
first differential on $\mathrm{C}(\Omega_{\mathbb{T}})$ induced by $d_\T$.
In order to employ \cref{Ch pullback}, we have to calculate the composition
of $\alpha$ with the various differentials.
A standard calculation shows that\footnote{This is precisely \cite[Equation
(5.7)]{GüneysuLudewig} modulo the fact that there is a small
misprint in the equation concerning $\underline{d}_{\mathbb{T}}\alpha$ as
well as an ambiguity in the definition of $\underline{S}$.}
\begin{equation*}
	\underline{b}' \alpha = \alpha \underline{b}, \quad \alpha
	\underline{d}_{\mathbb{T}} = \underline{d}_{\mathbb{T}} \alpha - h,
	\quad \alpha \underline{B} = \underline{S}' h,
\end{equation*}
where in the first equation $\underline{b}'$ denotes the second differential
on the quotient
$\mathrm{B}(\underline{\Omega_{\mathbb{T}}})$, in the second equation on the
right hand side $\underline{d}_{\mathbb{T}}$ denotes the first
differential on $\mathrm{B}(\underline{\Omega_{\mathbb{T}}})$ and $h$ is
given by
\begin{equation*}
	h \colon \mathrm{C}(\Omega_{\mathbb{T}}) \to
	\mathrm{B}(\underline{\Omega_{\mathbb{T}}}), \quad (\theta_0, \ldots,
	\theta_N) \mapsto \underline{\mathbf{N}}(\theta_0, \ldots, \theta_N)
\end{equation*}
and $\underline{S}'$ is given by
\begin{equation*}
	\underline{S}'(\theta_1, \ldots, \theta_N) \coloneqq (\sigma,
	\theta_1,
	\ldots, \theta_N) + \underline{S}(\theta_1, \ldots, \theta_N)
\end{equation*}
where $\underline{S}$ is defined on
$\mathrm{B}(\underline{\Omega_{\mathbb{T}}})$ analogously to $S$ introduced
in \cref{Chen normalization S}.
This yields
\begin{equation}\label{Interaction alpha}
	\alpha(\underline{d}_{\mathbb{T}} + \underline{b} - \underline{B}) =
	(\underline{d}_{\mathbb{T}} + \underline{b}')\alpha - (\underline{S}' +
	\mathbf{1})h.
\end{equation}
and hence, due to \cref{Ch pullback}, we have
\begin{align*}
	(\underline{d}_{\mathbb{T}} + \underline{b} - \underline{B})^{\vee}
	\mathrm{Ch}_{\mathbb{T}} &= (-1)^{q+1} \CStr\big(
	\underline{\Phi}_1^{\mathscr{M}_{\mathbb{T}}} \alpha
	(\underline{d}_{\mathbb{T}} + \underline{b} - \underline{B})\big) \\
	\notag
	& = (-1)^{q+1} \CStr\big(
	\underline{\Phi}_1^{\mathscr{M}_{\mathbb{T}}}
	(\underline{d}_{\mathbb{T}} + \underline{b}')\alpha\big) - (-1)^{q+1}
	\CStr\big(
	\underline{\Phi}_1^{\mathscr{M}_{\mathbb{T}}}(\underline{S}' +
	\mathbf{1})h\big).
\end{align*}
If $c \in \mathrm{B}(\underline{\Omega_{\mathbb{T}}})$, it follows from
\cite[Theorem 4.7]{GüneysuLudewig}
that $\underline{\Phi}_1^{\mathscr{M}_{\mathbb{T}}}
\left((\underline{d}_{\mathbb{T}} + \underline{b}')\alpha(c)\right)$ is a
sum of graded commutators of operators on $\HH$ on which, by virtue of
\cref{Clifford trace is trace}, the Clifford supertrace $\CStr$ vanishes.
Moreover, using that $F(\sigma) = \mathbf{1}$ and $F(\sigma, \theta) = 0
= F(\theta,
\sigma)$ for all $\theta \in \Omega_{\mathbb{T}}$, the same arguments
as used in \cref{Ch pullback} together with \cref{Pulling out 1} show that
\begin{equation}\label{Clifford supertrace and S + id}
	\CStr(
	\underline{\Phi}_1^{\mathscr{M}_{\mathbb{T}}}\underline{S}') =
	-
	\CStr(\underline{\Phi}_1^{\mathscr{M}_{\mathbb{T}}})
\end{equation}
when restricted to the subspace
$\mathrm{B}^{\sharp}(\underline{\Omega_{\mathbb{T}}})$ which is the image of
$\underline{\mathbf{N}}$.
Therefore,
\begin{equation*}
	\CStr\big(
	\underline{\Phi}_1^{\mathscr{M}_{\mathbb{T}}} (\underline{S}' +
	\mathbf{1})h\big) = 0
\end{equation*}
which proves coclosedness.\qed
\subsubsection{Proof of \texorpdfstring{\cref{Thm ChM}}{Theorem 5.10}, Part 3}
We need to prove that $\mathrm{Ch}_{\mathscr{M}_{\mathbb{T}}}$ vanishes on
the subcomplex $\mathrm{D}^{\mathbb{T}}(\Omega)$.
First of all, we notice that $\mathbf{c}(\sigma \theta_0) = 0$ so
that $\mathrm{Ch}_{\mathscr{M}_{\mathbb{T}}}$ vanishes on the image of $R$.
Moreover, since $\alpha(S + \mathbf{1}) = (S' + \mathbf{1})
\alpha$, it follows from \cref{Ch pullback} and \cref{Clifford supertrace
	and S + id} that $\mathrm{Ch}_{\mathscr{M}_{\mathbb{T}}}$ vanishes on the
image of $S + \mathbf{1}$.
Note that due to \cref{Multiplicativity}, for $f \in \Omega^0$ we have
\begin{equation*}
	F_{\mathscr{M}_\T}(f) = 0 \quad \text{and} \quad F_{\mathscr{M}_\T}(f,
	\theta) = F_{\mathscr{M}_\T}(\theta, f) = 0
\end{equation*}
for all $\theta \in \Omega_{\mathbb{T}}$, hence $\ChernT(S_i^{(f)}) = 0$.
Finally, we have to show that $\ChernT$ vanishes on the image of
$T_i^{(f)}$ for all $f \in \Omega^0$ and $i \in \N_0$.
However, this can be proven analogously to \cite[Theorem
5.6]{GüneysuLudewig}.\qed
\subsubsection{Proof of \texorpdfstring{\cref{Thm ChM}}{Theorem 5.10}, Part
4}
We will adapt the proof of \cite[Theorem 6.2]{GüneysuLudewig} to
our more general setting and highlight only the main differences.
To this end, let $\mathscr{M}^s = (\HH, \mathbf{c}^s, Q_s)$, $s\in [0,1]$, be
a homotopy of $\vartheta$-summable $\mathcal{C}_q$-Fredholm modules.
We define a family of
$\Lopclifqtrace(\HH)$-valued bar
cochains:
\begin{equation*}
	\Psi_T^{\mathscr{M}^s} \coloneqq -T\int_0^1
	{\Phi}_{uT}^{\mathscr{M}^s}\, \dot{\omega}_{\mathscr{M}^s}
	{\Phi}_{(1-u)T}^{\mathscr{M}^s} \,\mathrm{d} u.
\end{equation*}
For improved readability, we will write $\Psi^s_T \coloneqq
\Psi_T^{\mathscr{M}^s}$, $\Phi_T^s \coloneqq
\Phi_T^{\mathscr{M}^s}$ etc.
\begin{lem}
	There exists a continuous seminorm $\nu$ on $\Omega$ such that for each
	$s \in [0,1]$ and $T > 0$ and all $\theta_1, \ldots, \theta_N \in \Omega$
	the operator $\Psi_T^{\mathscr{M}^s}(\theta_1, \ldots, \theta_N)$ is
	well-defined and trace-class with
	\begin{equation}\label{Estimate Psi}
		\lVert \Psi_T^s(\theta_1, \ldots, \theta_N) \rVert_1 \leq e^{T/2}
		\Tr(e^{-TQ_s^2/2}) \dfrac{T^{(N+1)/2}}{\lfloor N/2 \rfloor !}
		\nu(\theta_1) \cdot
		\ldots \cdot \nu(\theta_N).
	\end{equation}
	The same holds true for $\Psi_T^sQ_s$ and $Q_s \Psi_T^s$ instead
	of $\Psi_T^s$ if we replace
	$T^{(N+1)/2}$ with $T^{N/2}$ and $\lfloor N/2\rfloor!$ with $\lfloor
	(N-1)/2 \rfloor!$ on the right-hand side.
\end{lem}
\begin{proof}
	Similarly to \cref{Fundamental estimate}, it suffices to prove this for
	$\Psi_T^s$.
	We need the following
	\begin{claim}
		For suitable operators $A_1, \ldots, A_N, B$ on $\HH$, it holds that
		\begin{align}\label{Bracket B}
			\begin{split}
				&\{A_1, \ldots, A_{k-1}, B, A_k, \ldots, A_N\}_{TQ_s^2} \\
				& ~~~~~~~~~~= \int_0^1 u^{k-1}(1-u)^{N-(k-1)}\{A_1, \ldots,
				A_{k-1}\}_{uTQ_s^2} B \{A_k, \ldots, A_N\}_{(1-u)TQ_s^2} \,
				\mathrm{d} u.
			\end{split}
		\end{align}
	\end{claim}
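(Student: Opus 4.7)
The plan is to prove the identity by unfolding the left-hand side as an integral over the standard simplex $\Delta_{N+1}$ in which $B$ occupies the $k$-th slot, then splitting off the $B$-variable and rescaling the two resulting sub-simplices so that each becomes a standard simplex of the appropriate dimension.

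First, by definition, the left-hand side equals
\begin{equation*}
\int_{\Delta_{N+1}} e^{-\tau_1 T Q_s^2} A_1 \cdots A_{k-1} e^{-(\tau_k - \tau_{k-1}) T Q_s^2} B \, e^{-(\tau_{k+1}-\tau_k) T Q_s^2} A_k \cdots A_N \, e^{-(1 - \tau_{N+1}) T Q_s^2} \, \mathrm{d}\tau.
\end{equation*}
I would apply Fubini to integrate first over all variables except $\tau_k$, setting $u := \tau_k$. The remaining domain then splits as a product of two regions: $\{0 \le \tau_1 \le \cdots \le \tau_{k-1} \le u\}$ and $\{u \le \tau_{k+1} \le \cdots \le \tau_{N+1} \le 1\}$.

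Next, I would perform the affine change of variables $\tau_j = u \sigma_j$ for $1 \le j \le k-1$ and $\tau_{k+j} = u + (1-u) \eta_j$ for $1 \le j \le N - k + 1$. These map the two regions above onto $\Delta_{k-1}$ and $\Delta_{N-k+1}$ respectively, producing Jacobians $u^{k-1}$ and $(1-u)^{N-k+1}$. Substituting back into the exponents and using the identities $\tau_j - \tau_{j-1} = u(\sigma_j - \sigma_{j-1})$, $\tau_k - \tau_{k-1} = u(1 - \sigma_{k-1})$, $\tau_{k+1} - \tau_k = (1-u)\eta_1$, and $1 - \tau_{N+1} = (1-u)(1 - \eta_{N-k+1})$, the factor $e^{-s T Q_s^2}$ with scale $s$ becomes $e^{-s'(uTQ_s^2)}$ on the left of $B$ and $e^{-s'((1-u)TQ_s^2)}$ on the right. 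Since $B$ does not depend on any of the remaining integration variables, the integrand factorizes, and one recognizes the two halves as the brackets $\{A_1, \ldots, A_{k-1}\}_{uTQ_s^2}$ and $\{A_k, \ldots, A_N\}_{(1-u)TQ_s^2}$, yielding the claimed formula.

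The content of the argument is routine; the only technicality worth flagging is to verify that the analytic requirements (A1), (A2) from \cref{Def FM} and the uniform estimates in (H2), (H3) legitimize both the Fubini application and the rearrangement of operator products inside the trace-ideal, so that each bracket appearing in the final formula is itself a well-defined trace-class operator. This justification proceeds exactly as in the corresponding setup for $\Phi_T^{\mathscr{M}^s}$ treated in \cref{Fundamental estimate}, and once the claim is in place one reads off the bound \eqref{Estimate Psi} by combining it with the estimate of \cref{Fundamental estimate} applied to both factors and integrating the Beta-type weight $u^{k-1}(1-u)^{N-k+1}$ over $[0,1]$.
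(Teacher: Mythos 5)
Your proposal is correct and follows essentially the same route as the paper's proof: both isolate the variable $\tau_k=u$ attached to $B$, split the remaining domain into the two sub-simplices $\{0\leq\tau_1\leq\ldots\leq\tau_{k-1}\leq u\}$ and $\{u\leq\tau_{k+1}\leq\ldots\leq\tau_{N+1}\leq1\}$, and rescale them onto $\Delta_{k-1}$ and $\Delta_{N+1-k}$, producing the weights $u^{k-1}$ and $(1-u)^{N-(k-1)}$ and the rescaled heat parameters $uTQ_s^2$ and $(1-u)TQ_s^2$. The analytic justification you flag (via \cite[Lemma 4.2]{GüneysuLudewig} and the hypotheses (H1)--(H3)) is exactly how the paper handles it as well.
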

	\begin{proof}[Proof of Claim]
		For an integrable function $G \colon \Delta_{N+1} \to E$ in a Banach
		space $E$, we may write
		\footnotesize
		\begin{align*}
			& \int_{\Delta_{N+1}} G(\tau_1, \ldots, \tau_{N+1}) \, \mathrm{d}
			\tau
			\\
			& \qquad = \int_0^1 \dint \tau_{N+1} \int_0^{\tau_{N+1}} \dint
			\tau_N \ldots \int_0^{\tau_2} \dint \tau_1 \, G(\tau_1, \ldots,
			\tau_{N+1}) \\
			& \qquad = \int_0^1 \dint \tau_k \int_{\tau_k}^1 \dint \tau_{N+1}
			\int_{\tau_k}^{\tau_{N+1}} \dint \tau_N \ldots
			\int_{\tau_k}^{\tau_{k+2}} \dint
			\tau_{k+1} \int_0^{\tau_k} \dint \tau_{k-1} \ldots \int_0^{\tau_2}
			\dint \tau_1 \, G(\tau_1, \ldots, \tau_{N+1}) \\
			& \qquad = \int_0^1 \dint t \int_{0}^{1-t} \dint s_{N+1 - k}
			\int_{0}^{s_{N+1 -k}} \dint s_{N-k} \ldots
			\int_{0}^{s_2} \dint s_1 \\
			& ~~~~~~~~~~~~~~~~~~~~~~~~~~~~~~~~~~~~~ \int_0^{t} \dint
			\tau_{k-1} \ldots \int_0^{\tau_2}
			\dint \tau_1 \, G(\tau_1, \ldots, \tau_{k-1}, t, t+s_1, \ldots, t
			+ s_{N+1 - k}).
		\end{align*}
		\normalsize
		Setting
		\footnotesize
		\begin{equation*}
			G(\tau_1, \ldots, \tau_{N+1}) \coloneqq e^{-\tau_1 TQ^2}A_1 \cdot
			\ldots \cdot e^{-(\tau_k - \tau_{k-1})TQ^2} B
			e^{-(\tau_{k+1} - \tau_k)TQ^2} \cdot \ldots \cdot A_N e^{-(1 -
				\tau_{N+1})TQ^2},
		\end{equation*}
		\normalsize
		we see that
		\begin{align*}
			& G(\tau_1, \ldots, \tau_{k-1}, t, t+s_1, \ldots, t +
			s_{N+1 - k}) \\
			& \qquad = \left(e^{-\tau_1 TQ^2}A_1
			e^{-(\tau_2 - \tau_1)TQ^2} \cdot \ldots \cdot A_{k-1} e^{-(t -
				\tau_{k-1})TQ^2}\right) \cdot B \cdot \\
			& \qquad \qquad \cdot \left(e^{-s_1 TQ^2} A_k e^{-(s_2 - s_1)TQ^2}
			\cdot \ldots \cdot A_N e^{-(1 - t - s_{N+1 - k})TQ^2}\right).
		\end{align*}
		Now, with the substitution $\tilde{s}_j = s_j/(1-t)$, the outer
		integral becomes
		\footnotesize
		\begin{align*}
			& \int_{0}^{1-t} \dint s_{N+1 - k}
			\int_{0}^{s_{N+1 -k}} \dint s_{N-k} \ldots
			\int_{0}^{s_2} \dint s_1 \, e^{-s_1 TQ^2} A_k e^{-(s_2 - s_1)TQ^2}
			\cdot \ldots \cdot A_N e^{-(1 - t - s_{N+1 - k})TQ^2} \\
			& \qquad = \int_{0}^{1-t} \dint s_{N+1 - k}
			\int_{0}^{s_{N+1 -k}} \dint s_{N-k} \ldots \\
			& ~~~~~~~~~~~~~~~~~~~~~~~~~ \ldots
			\int_{0}^{s_2} \dint s_1 \, e^{-\frac{s_1}{1-t}(1-t)TQ^2} A_k
			e^{-(\frac{s_2}{1-t} - \frac{s_1}{1-t})(1-t)TQ^2}
			\cdot \ldots \cdot A_N e^{-(1 - \frac{s_{N+1 -
						k}}{1-t})(1-t)TQ^2} \\
			& \qquad = (1-t)^{N+1-k} \int_0^1 \dint \tilde{s}_{N+1-k}
			\int_0^{\tilde{s}_{N+1-k}} \dint \tilde{s}_{N-k} \ldots \\
			& ~~~~~~~~~~~~~~~~~~~~~~~~~ \ldots
			\int_0^{\tilde{s}_2} \dint \tilde{s}_1 \,
			e^{-\tilde{s_1}(1-t)TQ^2}A_k e^{-(\tilde{s_2} -
				\tilde{s_1})(1-t)TQ^2} \cdot \ldots \cdot A_N e^{-(1 -
				\tilde{s}_{N+1-k})(1-t)TQ^2} \\
			& \qquad = (1-t)^{N+1 - k} \{A_k, \ldots, A_N\}_{(1-t)TQ^2}.
		\end{align*}
		\normalsize
		Moreover, with the substitution $\tilde{\tau}_j = \tau_j/t$ the inner
		integral becomes
		\footnotesize
		\begin{align*}
			& \int_0^{t} \dint \tau_{k-1}
			\ldots \int_0^{\tau_2}
			\dint \tau_1 \, e^{-\tau_1 TQ^2}A_1
			e^{-(\tau_2 - \tau_1)TQ^2} \cdot \ldots \cdot A_{k-1} e^{-(t -
				\tau_{k-1})TQ^2} \\
			& \qquad = \int_0^{t} \dint \tau_{k-1}
			\ldots \int_0^{\tau_2}
			\dint \tau_1 \, e^{-\frac{\tau_1}{t} tTQ^2}A_1
			e^{-(\frac{\tau_2}{t} - \frac{\tau_1}{t})tTQ^2} \cdot \ldots
			\cdot A_{k-1} e^{-(1 - \frac{\tau_{k-1}}{t})tTQ^2} \\
			& \qquad = t^{k-1} \int_0^1 \dint \tilde{\tau}_{k-1}
			\int_0^{\tilde{\tau}_{k-1}} \dint \tilde{\tau}_{k-2} \ldots
			\int_0^{\tilde{\tau}_2} \dint \tilde{\tau}_1 \,
			e^{-\tilde{\tau}_1 tTQ^2} A_1 e^{-(\tilde{\tau}_2 -
				\tilde{\tau}_1)tTQ^2} \cdot \ldots \cdot A_{k-1} e^{-(1 -
				\tilde{\tau}_{k-1})tTQ^2} \\
			& \qquad = t^{k-1} \{A_1, \ldots, A_{k-1}\}_{tTQ^2}.
		\end{align*}
		\normalsize
		Finally, plugging those two results into the integral over
		$\Delta_{N+1}$ finishes the proof.
	\end{proof}
	Due to the analytic requirements (H2) and (H3) from \cref{Definition
		homotopy}, the operators  $\dot{\mathbf{c}}^s(\theta)$ and $\dot{Q}_s$
	satisfy the assumptions from \cite[Lemma 4.2]{GüneysuLudewig} so that
	$\dot{\omega}_{\mathscr{M}^s}\Phi_{(1-u)T}^{\mathscr{M}^s}$, and
	consequently the integrand in the definition of $\Psi^s_T$, is a
	well-defined $\Lopclifqtrace(\HH)$-valued bar cochain.
	Applying \cref{Bracket B} together with \cref{Product bar cochains,Defn
		Quantization}, it follows that $\Psi_T^s$ is well-defined
	and $\Psi_T^s(\theta_1, \ldots, \theta_N)$ is equal to
	\footnotesize
	\begin{align*}
		& \sum_{M = 1}^{N} \Bigg((-T)^{M+1} \sum_{j=1}^{M+1} \sum_{I \in
			\mathcal{P}_{M, N}} \{F_s^{\geq 1}(\theta_{I_1}),
		\ldots, F_s^{\geq 1}(\theta_{I_{j-1}}), \dot{Q}_s, F_s^{\geq
			1}(\theta_{I_j}), \ldots, F_s^{\geq 1}(\theta_{I_M})\}_{TQ_s^2} \\
		& \quad + (-T)^M\sum_{j=1}^{M} \sum_{k=1}^N \sum_{\substack{I \in
				\mathcal{P}_{M, N} \\
				I_j =  \{k\}}} \{F_s^{\geq 1}(\theta_{I_1}),
		\ldots, F_s^{\geq 1}(\theta_{I_{j-1}}), \dot{\mathbf{c}}^s(\theta_k),
		F_s^{\geq 1}(\theta_{I_{j+1}}), \ldots, F_s^{\geq
			1}(\theta_{I_M})\}_{TQ_s^2} \Bigg).
	\end{align*}
	\normalsize
	Using the uniform norm bound on $\dot{Q}_s \Delta_s^{-1/2}$, we may apply
	\cite[Lemma 4.2]{GüneysuLudewig} and repeat the steps from the proof of
	\cite[Theorem 4.6]{GüneysuLudewig} to estimate the two brackets in the
	above display.
	The inner sums over give extra factors $N$ and $N^2$ respectively, which
	may be absorbed into the seminorm since $N^2 + N \leq 4^N$ whereas the
	factor $T^{(N+1)/2}$ is a result of the additional $T$ appearing in the
	definition of $\Psi^s_T$.
	Since $\nu$ may be chosen independently of $s$ due to the analytic
	requirements (H1) and (H3) from \cref{Definition homotopy}, we arrive at
	\cref{Estimate Psi}.
\end{proof}
Finally, denote by $\mathscr{M}^s_\T \coloneqq (\HH,\mathbf{c}_{\mathbb{T}}^s,
Q_s), s \in [0,1]$, the homotopy of $\vartheta$-summable
\textit{weak} $\mathcal{C}_q$-Fredholm modules over $\Omega_{\mathbb{T}}$
associated to the homotopy $\mathscr{M}^s$.
The \textit{Chern-Simons form} is then given by
\begin{equation*}
	\mathrm{CS}\big((\mathscr{M}_{\mathbb{T}}^s)_{s \in [0,1]}\big) \coloneqq
	(-1)^q\int_0^1 \alpha^* \CStr\big(
	\underline{\Psi}_1^{\mathscr{M}^s_{\mathbb{T}}}\big) \, \mathrm{d}s
\end{equation*}
where $\alpha$ was defined in \cref{Definition alpha} and
$\underline{\Psi}_1^{\mathscr{M}^s_{\mathbb{T}}}$ is the quotient map of
$\Psi_1^{\mathscr{M}^s_{\mathbb{T}}}$ on
$\mathrm{B}(\underline{\Omega_{\mathbb{T}}})$.
\cref{Estimate Psi} and (H1) from \cref{Definition homotopy} show that this
is a well-defined analytic cochain.
Moreover, since $\Psi_1^{\mathscr{M}^s_{\mathbb{T}}}$ is odd, $\alpha$ is even
and $\CStr$ is even for $q$ even and odd for $q$ odd, it follows that the
Chern-Simons form is odd for $q$ even and even for $q$ odd.
Calculations similar to the ones for $\ChernT$ show that it vanishes on the
images of the operators in \cref{Chen operators} since the $\mathscr{M}^s$
satisfy \cref{Multiplicativity}.
Now, homotopy invariance is a consequence of
\begin{thm}
	The following transgression formula holds:
	\begin{equation*}
		\mathrm{Ch}_{\mathscr{M}^1_{\mathbb{T}}} -
		\mathrm{Ch}_{\mathscr{M}^0_{\mathbb{T}}} = (\underline{d}_{\mathbb{T}}
		+ \underline{b} -
		\underline{B})^{\vee}\mathrm{CS}\big((\mathscr{M}_{\mathbb{T}}^s)_{s
			\in [0,1]}\big).
	\end{equation*}
\end{thm}
\begin{rem}
	Since the $\mathscr{M}^s$ satisfy \cref{Multiplicativity}, it follows
	that $\mathrm{Ch}_{\mathscr{M}^s_{\mathbb{T}}}$ is Chen normalized for
	all $s \in [0,1]$ due to Part 3 of \cref{Thm ChM}.
	Hence, the above formula shows a posteriori that
	$\mathrm{CS}\big((\mathscr{M}_{\mathbb{T}}^s)_{s \in [0,1]}\big)$
	vanishes on all of $\mathrm{D}^{\mathbb{T}}(\Omega)$ and is therefore
	Chen normalized as well.
\end{rem}
\begin{proof}
	According to \cref{Ch pullback}, we have
	\begin{equation*}
		\mathrm{Ch}_{\mathscr{M}^s_{\mathbb{T}}} = -\alpha^*
		\CStr(\underline{\Phi}_1^{\mathscr{M}^s_{\mathbb{T}}}).
	\end{equation*}
	As in \cite[Proposition 6.4]{GüneysuLudewig}, one has
	\begin{equation}\label{Bianchi psi}
		\dfrac{d}{ds} \Phi_T^s = \delta \Psi_T^s + [\omega_s, \Psi_T^s].
	\end{equation}
	Thus,
	\begin{align*}
		\frac{d}{ds}\mathrm{Ch}_{\mathscr{M}^s_{\mathbb{T}}} & = -
		\CStr\Big(\frac{d}{ds}
		\underline{\Phi}_1^{\mathscr{M}_{\mathbb{T}}^s} \alpha\Big) = -
		\left(\CStr\big(\delta \underline{\Psi}_1^{\mathscr{M}_{\mathbb{T}}^s}
		\alpha\big) + \CStr\big([\omega^s,
		\underline{\Psi}_1^{\mathscr{M}_{\mathbb{T}}^s}]\alpha\big)\right) \\
		& = - \CStr
		\big(\underline{\Psi}_1^{\mathscr{M}_{\mathbb{T}}^s}(\underline{d}_\T
		+
		\underline{b}')\alpha\big),
	\end{align*}
	where for the last equality we used that the image of $\alpha$ is
	contained in
	$\mathrm{B}^{\sharp}(\underline{\Omega_{\mathbb{T}}})$ so that $[\omega^s,
	\underline{\Psi}_1^{\mathscr{M}_{\mathbb{T}}^s}]\alpha$
	is a sum of supercommutators on which $\CStr$ vanishes.
	Therefore, with a view on \cref{Interaction alpha}, we get
	\begin{equation*}
		\frac{d}{ds}\mathrm{Ch}_{\mathscr{M}^s_{\mathbb{T}}} = (-1)^{q}
		(\underline{d}_{\mathbb{T}} + \underline{b} - \underline{B})^{\vee}
		\alpha^*\CStr\big(\underline{\Psi}_1^{\mathscr{M}_{\mathbb{T}}^s}\big)
		- \CStr\big(\underline{\Psi}_1^{\mathscr{M}_{\mathbb{T}}^s}
		(\underline{S}' + \mathbf{1})h\big).
	\end{equation*}
	Note that the second summand is zero similarly to \cref{Clifford 
	supertrace and S + id}.
	Hence, integrating both sides of the above equation yields the desired
	transgression formula.
\end{proof}
\subsection{Comparison to Getzler's odd Chern character}
Let $\mathscr{M} = (\HH, \mathbf{c}, Q)$ be an odd $\vartheta$-summable weak
Fredholm module over a locally convex dg algebra $\Omega$.
We define its Chern character via
\begin{equation*}
	\Chern \coloneqq \mathrm{Ch}_{\tilde{\mathscr{M}}}
\end{equation*}
where $\tilde{\mathscr{M}}$ is the associated $\vartheta$-summable weak
$\mathcal{C}_1$-Fredholm module defined at the end of \cref{Section: odd FM}.
If we assume $\Omega$ to be trivially graded with trivial differential $d =
0$, we will see that the cochain $\Chern$ essentially coincides
with the odd Chern character as defined in \cite[Definition 4.1]{GetzlerOdd}.

Note that in this case we have $\Omega^0 = \Omega$ so
that $\mathbf{c}$ is a representation, i. e. it is multiplicative everywhere.
In particular, the same holds true for $\tilde{\mathbf{c}}$ so that
\begin{equation*}
	F_{\tilde{\mathscr{M}}}(\theta_1, \theta_2) = (-1)^{\lvert
		\theta_1 \rvert}(\tilde{\mathbf{c}}(\theta_1 \theta_2) -
	\tilde{\mathbf{c}}(\theta_1)\tilde{\mathbf{c}}(\theta_2)) = 0
\end{equation*}
for all $\theta_1, \theta_2 \in \Omega$.
Also,
\begin{equation*}
	F_{\tilde{\mathscr{M}}}(\theta_1) = [\tilde{Q},
	\tilde{\mathbf{c}}(\theta_1)] - \tilde{\mathbf{c}}(d \theta_1)
	= [\tilde{Q},
	\tilde{\mathbf{c}}(\theta_1)]
	=
	\begin{pmatrix}
		0 & [Q, \mathbf{c}(\theta_1)]\\
		[Q, \mathbf{c}(\theta_1)] & 0
	\end{pmatrix}
\end{equation*}
for $\theta_1 \in \Omega$ and $T > 0$.
Consequently,
\begin{align*}
	\Phi_T^{\tilde{\mathscr{M}}}(\theta_1, \ldots, \theta_N) & = (-T)^N
	\{F_{\tilde{\mathscr{M}}}(\theta_1), \ldots,
	F_{\tilde{\mathscr{M}}}(\theta_N)\}_{T\tilde{Q}^2} \\
	& = (-T)^N \{[\tilde{Q}, \tilde{\mathbf{c}}(\theta_1)], \ldots,
	[\tilde{Q}, \tilde{\mathbf{c}}(\theta_N)]\}_{T\tilde{Q}^2} \\
	& = (-T)^N \int_{\Delta_N}e^{-\tau_1 T\tilde{Q}^2}[\tilde{Q},
	\tilde{\mathbf{c}}(\theta_1)]e^{-(\tau_2 - \tau_1)T\tilde{Q}^2} \cdot
	\ldots \\
	& \qquad \qquad \qquad \ldots \cdot
	e^{-(\tau_{N} - \tau_{N-1})T\tilde{Q}^2}[\tilde{Q},
	\tilde{\mathbf{c}}(\theta_N)]e^{-(1 - \tau_N)T\tilde{Q}^2} \, \mathrm{d}
	\tau
\end{align*}
for $\theta_1, \ldots, \theta_N \in \Omega$.
\begin{lem}
	For $s_1, \ldots, s_{N+1} \geq 0$ and $\theta_1, \ldots, \theta_N \in
	\Omega$
	we have
	\begin{align*}
		& e^{-s_1\tilde{Q}^2}[\tilde{Q},
		\tilde{\mathbf{c}}(\theta_1)]e^{-s_2\tilde{Q}^2} \cdot
		\ldots \cdot
		e^{-s_N\tilde{Q}^2}[\tilde{Q},
		\tilde{\mathbf{c}}(\theta_N)]e^{-s_{N+1}\tilde{Q}^2} = \\
		& \left(
		\begin{smallmatrix}
			0 & e^{-s_1 Q^2}[Q,
			\mathbf{c}(\theta_1)]e^{-s_2 Q^2} \cdot
			\ldots \cdot
			e^{-s_N Q^2}[Q,
			\mathbf{c}(\theta_N)]e^{-s_{N+1}Q^2} \\
			e^{-s_1 Q^2}[Q,
			\mathbf{c}(\theta_1)]e^{-s_2 Q^2} \cdot
			\ldots \cdot
			e^{-s_N Q^2}[Q,
			\mathbf{c}(\theta_N)]e^{-s_{N+1}Q^2} & 0
		\end{smallmatrix}\right)
	\end{align*}
	for $N \geq 1$ odd and
	\begin{align*}
		& e^{-s_1\tilde{Q}^2}[\tilde{Q},
		\tilde{\mathbf{c}}(\theta_1)]e^{-s_2\tilde{Q}^2} \cdot
		\ldots \cdot
		e^{-s_N\tilde{Q}^2}[\tilde{Q},
		\tilde{\mathbf{c}}(\theta_N)]e^{-s_{N+1}\tilde{Q}^2} = \\
		& \left(
		\begin{smallmatrix}
			e^{-s_1 Q^2}[Q,
			\mathbf{c}(\theta_1)]e^{-s_2 Q^2} \cdot
			\ldots \cdot
			e^{-s_N Q^2}[Q,
			\mathbf{c}(\theta_N)]e^{-s_{N+1}Q^2} & 0 \\
			0 & e^{-s_1 Q^2}[Q,
			\mathbf{c}(\theta_1)]e^{-s_2 Q^2} \cdot
			\ldots \cdot
			e^{-s_N Q^2}[Q,
			\mathbf{c}(\theta_N)]e^{-s_{N+1}Q^2}
		\end{smallmatrix}\right)
	\end{align*}
	for $N \geq 1$ even.
\end{lem}
\begin{proof}
	For $1 \leq j \leq N+1$ we have
	\begin{equation*}
		e^{-s_j \tilde{Q}^2} =
		\begin{pmatrix}
			e^{-s_j Q^2} & 0 \\
			0 & e^{-s_j Q^2}
		\end{pmatrix}
	\end{equation*}
	so that
	\begin{equation*}
		e^{-s_j \tilde{Q}^2}[\tilde{Q},
		\tilde{\mathbf{c}}(\theta_j)]e^{-s_{j+1} \tilde{Q}^2} =
		\begin{pmatrix}
			0 & e^{-s_j Q^2}[Q, \mathbf{c}(\theta_j)]e^{-s_{j+1}Q^2} \\
			e^{-s_j Q^2}[Q, \mathbf{c}(\theta_j)]e^{-s_{j+1}Q^2} & 0
		\end{pmatrix}.
	\end{equation*}
	The claim is an easy consequence of this calculation.
\end{proof}
Since all elements in $\Omega$ are even, it follows that
\begin{equation*}
	\gamma \tilde{\mathbf{c}}(\theta_0) =
	\begin{pmatrix}
		0 & \mathbf{1} \\
		-\mathbf{1} & 0
	\end{pmatrix}
	\cdot
	\begin{pmatrix}
		\mathbf{c}(\theta_0) & 0 \\
		0 & \mathbf{c}(\theta_0)
	\end{pmatrix}
	=
	\begin{pmatrix}
		0 & \mathbf{c}(\theta_0) \\
		-\mathbf{c}(\theta_0) & 0
	\end{pmatrix}.
\end{equation*}
Thus, putting together the above calculations, it follows that
$\mathrm{Ch}_{\mathscr{M}}(\theta_0, \ldots, \theta_N) = 0$ for $N \geq 0$ even
and
\begin{align*}
	\mathrm{Ch}_{\mathscr{M}}(\theta_0, \ldots, \theta_N) & =
	\CStr(\tilde{\mathbf{c}}(\theta_0)
	\Phi_1^{\tilde{\mathscr{M}}}(\theta_1, \ldots, \theta_N)) \\
	& = \frac 12
	\CStr\big(\gamma \tilde{\mathbf{c}}(\theta_0)
	\Phi_1^{\tilde{\mathscr{M}}}(\theta_1, \ldots, \theta_N)) \\
	& = 2 \cdot \frac 12 \Tr\big(\gamma \mathbf{c}(\theta_0)
	\Phi_1^{\mathscr{M}}(\theta_1, \ldots, \theta_N)) \\
	& = (-1)^N\int_{\Delta_N} \Tr(\mathbf{c}(\theta_0)e^{-\tau_1 Q^2}[Q,
	\mathbf{c}(\theta_1)]e^{-(\tau_2 - \tau_1) Q^2} \cdot \ldots \\
	& \qquad \qquad \qquad \ldots \cdot
	e^{-(\tau_N - \tau_{N-1})Q^2}[Q,
	\mathbf{c}(\theta_N)]e^{-(1 - \tau_N)Q^2}\big) \, \mathrm{d} \tau
\end{align*}
for $N \geq 0$ odd which, modulo signs and constants (which are the result of
differing sign conventions and normalizations of the supertrace), is nothing
else but the formula given in \cite[Definition 4.1, Proposition
4.2]{GetzlerOdd}.
Thus, our $\mathrm{Ch}_{\mathscr{M}}$ is a differential graded extension
of the odd Chern character defined by Getzler.
\section{A noncommutative index theorem for the odd Chern character}
Given an odd $\vartheta$-summable Fredholm module $\mathscr{M} = (\HH, \pi,
\mathrm{D})$ over a Banach $*$-algebra $\mathcal{A}$ in the sense of
\cite{ConnesCompact}, one can show that pairing the corresponding odd Chern
character $\mathrm{Ch}_*(\mathrm{D})$ with the odd Chern character
$\mathrm{Ch}^{*}(g)$ of a unitary matrix $g \in \mathrm{U}_m(\mathcal{A})$
yields the spectral flow of $\mathrm{D}$ and the twisted operator
$g^{-1}\mathrm{D}g$:
\begin{thm}[\protect{\cite[Theorem 4.3]{GetzlerOdd}}]\label{Getzler sf}
	We have
	\begin{equation}\label{Spectral flow pairing}
		\langle \mathrm{Ch}_*(\mathrm{D}), \mathrm{Ch}^{*}(g) \rangle =
		\frac{1}{\sqrt{\pi}} \int_0^1 \Tr(\dot{\mathrm{D}}_u
		e^{-\mathrm{D}_u^2}) \, \mathrm{d}u = \mathrm{sf}(D,
		g^{-1}\mathrm{D}g),
	\end{equation}
	where $\mathrm{D}_u \coloneqq (1-u)\mathrm{D} + ug^{-1}\mathrm{D}g$ and
	$\dot{\mathrm{D}}_u \coloneqq g^{-1} [\mathrm{D}, g]$.
\end{thm}
Consider now a locally convex dg algebra $\Omega$, $g \in
\mathrm{GL}_m(\Omega^0)$ and an odd $\vartheta$-summable Fredholm
module $\mathscr{M} = (\HH, \mathbf{c}, Q)$ over $\Omega$ as introduced in
\cref{Definition odd FM}.
We want to construct an odd entire chain $\mathrm{Ch}(g)$ which resembles
the odd Chern character of $g$ in the differential graded setting.
Moreover, we would like to obtain an integral formula similar to
\cref{Getzler sf} for the pairing of the analytic cochain
$\mathrm{Ch}_{\tilde{\mathscr{M}}}$,
where $\tilde{\mathscr{M}}$ is the $\vartheta$-summable
$\mathcal{C}_1$-Fredholm module associated to $\mathscr{M}$, with
$\mathrm{Ch}(g)$.
\subsection{Definition of
	\texorpdfstring{$\mathrm{Ch}(g)$}{Ch(g)}}\label{Section: definition cherng}
We will follow the constructions given in \cite[Section 5]{GüneysuCacciatori},
which deals with the special case $\Omega = C^{\infty}(M)$ for some smooth
compact manifold $M$, and make the necessary changes to accommodate our more
general setting.
To this end, we will define an odd entire chain $\mathrm{Ch}(g) \in
\mathrm{C}^{\epsilon}_{-}(\Omega_{\mathbb{T}})$ on the acyclic extension
$\Omega_{\mathbb{T}}$ of $\Omega$.\footnote{Note that we follow the sign
	conventions from \cite[Eq. (2.7)]{GüneysuLudewig} for the definition of
	$\Omega_{\mathbb{T}}$, i. e. we
	consider $d_{\mathbb{T}} = d - \iota$ instead of $d + \iota$, the latter
	convention being used in \cite[Section 3]{GüneysuCacciatori}.}
\begin{defn}
	For $g \in \mathrm{GL}_m(\Omega^0)$ we define the \textit{Maurer-Cartan
		form} $\omega \coloneqq \omega_{g} \coloneqq g^{-1} dg \in
	\mathrm{Mat}_m(\Omega^1) \subseteq
	\mathrm{Mat}_m(\Omega_{\mathbb{T}})$.
\end{defn}
\begin{lem}
	The Maurer-Cartan form satisfies the following equation:
	\begin{equation*}
		d_{\mathbb{T}}\omega + \omega^2 = 0.
	\end{equation*}
\end{lem}
\begin{proof}
	Since $\omega \in \mathrm{Mat}_m(\Omega)$, the action of the
	differential $d_{\mathbb{T}}$ is given by $d$.
	Using that $g^{-1}$ has degree $0$, the graded Leibniz rule implies
	\begin{equation*}
		d \omega = d(g^{-1}dg) = (dg^{-1})dg + g^{-1}d^2 g = (dg^{-1})dg.
	\end{equation*}
	On the other hand,
	\begin{equation}\label{dg^-1}
		0 = d(\mathbf{1}) = d(g^{-1}g) = (dg^{-1})g + g^{-1}dg \iff dg^{-1} =
		-g^{-1}(dg)g^{-1}.
	\end{equation}
	Therefore,
	\begin{equation*}
		d \omega = (dg^{-1})dg = -g^{-1}(dg)g^{-1}dg = -(g^{-1}dg)(g^{-1}dg) =
		-\omega^2
	\end{equation*}
	as desired.
\end{proof}
Following \cite[Eq. (24)]{GüneysuCacciatori}, we define
\begin{equation*}
	\mathcal{A}^s(g) \coloneqq s \omega - s(1-s)\sigma \omega^2, \quad
	\mathcal{B}^s(g) \coloneqq \sigma \omega
\end{equation*}
in $\mathrm{Mat}_m(\Omega_{\mathbb{T}})$ for $s \in I \coloneqq
[0,1]$.
Note that we have slightly different signs in the above definitions since we
use the convention $d_{\mathbb{T}} = d - \iota$.
\begin{defn}
	The \textit{odd Chern character of $g$} is given by the infinite cyclic
	chain
	\begin{equation*}
		\mathrm{Ch}(g) \coloneqq (\mathrm{Ch}_0(g), \mathrm{Ch}_1(g), \ldots)
		\in \prod_{N = 0}^{\infty} \Omega_\T \otimes
		\underline{\Omega_\T}[1]^{\otimes N}
	\end{equation*}
	where $\mathrm{Ch}_0(g) \coloneqq 0$ and
	\begin{equation*}
		\mathrm{Ch}_N(g) \coloneqq \Tr \left(\int_0^1 \mathbf{1} \otimes
		\sum_{k=1}^{N} \mathcal{A}^s(g)^{\otimes(k-1)} \otimes
		\mathcal{B}^s(g)
		\otimes \mathcal{A}^s(g)^{\otimes(N-k)} \, \mathrm{d}s\right)
	\end{equation*}
	for $N > 0$ and $\Tr$ denotes the generalized trace map,
	\begin{align*}\label{Definition generalized trace}
		\begin{split}
			\Tr \colon \mathrm{C}^\epsilon(\mathrm{Mat}_n(\Omega_\T)) &\to
			\mathrm{C}^\epsilon(\Omega_\T), \\
			(\theta_0, \ldots, \theta_N) &\mapsto \sum_{i_0, \ldots, i_N =
				1}^n
			\left((\theta_0)_{i_0}^{i_1}, (\theta_1)_{i_1}^{i_2}, \ldots,
			(\theta_N)_{i_N}^{i_0}\right).
		\end{split}
	\end{align*}
\end{defn}
We now prove:
\begin{thm}
	The following assertions hold:
	\begin{enumerate}
		\item One has $\mathrm{Ch}(g) \in
		\mathrm{C}^{\epsilon}_{-}(\Omega_{\mathbb{T}})$.
		\item One has $(\underline{d}_{\mathbb{T}} + \underline{b} -
		\underline{B})\mathrm{Ch}(g) =
		0$ in the Chen normalized entire complex $\mathrm{N}^{\mathbb{T},
			\epsilon}(\Omega)$.
	\end{enumerate}
\end{thm}
\begin{proof}
	1. Given a continuous seminorm $\nu$ on $\Omega_{\mathbb{T}}$, let
	\begin{equation*}
		C_{\nu} \coloneqq \sup_{s \in I} \, \max\left(\nu(\mathbf{1}),
		\max_{i,j =
			1, \ldots, m} \nu(\mathcal{A}^s(g)_{i}^j), \max_{i,j = 1, \ldots,
			m}
		\nu(\mathcal{B}^s(g)_{i}^j)\right).
	\end{equation*}
	With a view on \cref{Definition entire seminorm}, we see that
	\begin{equation}\label{Seminorm Ch(g)}
		\epsilon_{\nu}(\mathrm{Ch}(g)) \leq mC_{\nu}\sum_{N=0}^{\infty} N
		\dfrac{(m C_{\nu})^N}{\lfloor N/2 \rfloor!}.
	\end{equation}
	Since $\sqrt[N]{\lfloor N/2 \rfloor!} \to \infty$ as $N \to \infty$, it
	follows that
	\begin{equation*}
		\limsup_{N \to \infty}\sqrt[N]{\dfrac{N(mC_{\nu})^N}{\lfloor N/2
				\rfloor!}} = \limsup_{N \to \infty} \sqrt[N]{N} (m C_{\nu})
		\dfrac{1}{\sqrt[N]{\lfloor N/2 \rfloor!}} = 0
	\end{equation*}
	so that the sum in \cref{Seminorm Ch(g)} converges.
	Hence, $\mathrm{Ch}(g)$ is entire.
	It is easy to check that $\mathrm{Ch}(g)$ is odd.

	2. We have to prove that $(\underline{d}_{\mathbb{T}} + \underline{b} -
	\underline{B})\mathrm{Ch}(g) \in
	\overline{\mathrm{D}^{\mathbb{T}}(\Omega)}$.

	First of all, we notice that
	$\underline{B}\mathrm{Ch}(g)$ is a sum of elements of the form
	$S_i^{(\mathbf{1})}$ (see \cref{Si f}) so that $\underline{B}
	\mathrm{Ch}(g) \in \overline{\mathrm{D}^{\mathbb{T}}(\Omega)}$.

	Next, we prove that $(\underline{d}_{\mathbb{T}} +
	\underline{b})\mathrm{Ch}(g) \in
	\overline{\mathrm{D}^{\mathbb{T}}(\Omega)}$.
	To this end, by definition of the differentials, it follows that
	\begin{equation*}
		((\underline{d}_{\mathbb{T}} +
		\underline{b})\mathrm{Ch}(g))_N=((\underline{d}_{\mathbb{T}} +
		\underline{b})
		\mathrm{Ch}_N(g))_N+((\underline{d}_{\mathbb{T}} + \underline{b})
		\mathrm{Ch}_{N+1}(g))_N.
	\end{equation*}
	We compute some of the expressions appearing in the
	differentials.
	To this end, starting with $\underline{d}_{\mathbb{T}}$, let
	\begin{equation*}
		\theta^{(k)} \coloneqq \mathbf{1} \otimes {\mathcal
			A}^{s}(g)^{\otimes (k-1)} \otimes {\mathcal B}^s(g) \otimes
		{\mathcal A}^{s}(g)^{\otimes (N-k)}
	\end{equation*}
	for $1 \leq k \leq N$.
	Using the Maurer-Cartan equation, we get
	\begin{align*}
		d_{\mathbb{T}} \mathcal{A}^s(g) =
		d_{\mathbb{T}}(s\omega_g - s(1-s)\sigma \omega_g^2) & = s
		d\omega_g + s(1-s)\sigma d(\omega_g^2) + s(1-s)\omega_g^2 \\
		& = -s^2\omega_g^2
		+ s(1-s)\sigma d(\omega_g^2) = -s^2 \omega_g^2,
	\end{align*}
	where we used that $d(\omega_g^2) = (d \omega_g)\omega_g - \omega_g (d
	\omega_g) = -\omega_g^3 + \omega_g^3 = 0$ for the last equality.
	Furthermore,
	\begin{equation*}
		d_{\mathbb{T}} \mathcal{B}^s(g) =
		d_{\mathbb{T}}(\sigma \omega_g) = -\sigma d\omega_g
		-\omega_g = -(-\sigma \omega_g^2 + \omega_g).
	\end{equation*}
	By definition of the grading on $\mathrm{Mat}_m(\Omega_{\mathbb{T}})$, we
	have $\lvert \mathcal{A}^s(g) \rvert = 1$
	and $\lvert \mathcal{B}^s(g) \rvert = 0$.
	Thus, for $1 \leq k \leq N$ and $1 \leq i \leq k$, we have
	\begin{equation*}
		m_{i-1}^{(k)} \coloneqq m_{i-1}(\theta^{(k)}) = 0 +
		\sum_{j=1}^{i-1} 1 - (i-1) = 0
	\end{equation*}
	and for $i \geq k+1$ we get
	\begin{equation*}
		m_{i-1}^{(k)} = 0 + \sum_{j=1}^{k-1} 1 + 0 +
		\sum_{j=1}^{i-1-k} 1 - (i-1) = 1 \mod 2.
	\end{equation*}
	On the one hand,
	\begin{align*}
		(\underline{d}_\T + \underline{b})\left(\sum_{k=1}^N
		\theta^{(k)}\right)_N
		& =
		\sum_{k=1}^N \underline{d}_\T(\theta^{(k)}) \\
		& = \sum_{k=1}^N\left((d_{\mathbb{T}} \mathbf{1} \otimes \ldots)
		- \sum_{i = 1}^N (-1)^{m_{i-1}^{(k)}} (\underbrace{\mathbf{1} \otimes
			\ldots \otimes}_i d_{\mathbb{T}} \otimes \ldots)\right) \\
		& = - \mathbf{1} \otimes \sum_{k=1}^N
		\sum_{i=0}^{k-2} \mathcal{A}^s(g)^{\otimes i} \otimes (-s^2
		\omega_g^2)
		\otimes \mathcal{A}^s(g)^{\otimes (k-i-2)} \\
		& \qquad \qquad \qquad \qquad \otimes \sigma \omega_g
		\otimes  {\mathcal A}^s(g)^{\otimes (N-k)} \\
		&\quad+ \mathbf{1} \otimes \sum_{k=1}^N
		\sum_{i=0}^{N-k-1} \mathcal A^s(g)^{\otimes (k-1)} \otimes
		\sigma \omega_g \\
		& \qquad \qquad \qquad \qquad \otimes \mathcal A^s(g)^{\otimes i}
		\otimes (-s^2\omega_g^2 ) \otimes
		{\mathcal A}^s(g)^{\otimes (N-k-i-1)} \\
		&\quad + \mathbf{1} \otimes \sum_{k=1}^N \mathcal
		A^s(g)^{\otimes (k-1)} \otimes (-\sigma\omega_g^2
		+\omega_g) \otimes \mathcal A^s(g)^{\otimes (N-k)}.
	\end{align*}
	Therefore,
	\begin{align*}
		((\underline{d} + \underline{b})\mathrm{Ch}(g))_N
		& = - \Tr \left(\int_0^1 \mathbf{1} \otimes \sum_{k=1}^N
		\sum_{i=0}^{k-2} \mathcal{A}^s(g)^{\otimes i} \otimes (-s^2
		\omega_g^2)
		\otimes \mathcal{A}^s(g)^{\otimes (k-i-2)} \right.\\
		& \left. \vphantom{\int_0^1}\qquad \qquad \qquad \qquad \otimes
		\sigma \omega_g
		\otimes  {\mathcal A}^s(g)^{\otimes (N-k)} \, \mathrm{d}s \right)\\
		&\quad+ \Tr\left(\int_0^1 \mathbf{1} \otimes \sum_{k=1}^N
		\sum_{i=0}^{N-k-1} \mathcal A^s(g)^{\otimes (k-1)} \otimes
		\sigma \omega_g \right.\\
		& \left. \vphantom{\int_0^1}\qquad \qquad \qquad \qquad \otimes
		\mathcal A^s(g)^{\otimes i}
		\otimes (-s^2\omega_g^2 ) \otimes
		{\mathcal A}^s(g)^{\otimes (N-k-i-1)} \, \mathrm{d}s \right)\\
		&\quad + \Tr\left(\int_0^1 \mathbf{1} \otimes \sum_{k=1}^N \mathcal
		A^s(g)^{\otimes (k-1)} \otimes (-\sigma\omega_g^2
		+\omega_g) \otimes \mathcal A^s(g)^{\otimes (N-k)} \, \mathrm{d}s
		\right).
	\end{align*}
	On the other hand,
	\begin{equation*}
		(\underline{d}_\T + \underline{b})\left(\sum_{k=1}^{N+1}
		\theta^{(k)}\right)_N = \sum_{k=1}^{N+1} \underline{b}(\theta^{(k)})
	\end{equation*}
	where $\theta^{(k)} \coloneqq \mathbf{1} \otimes \mathcal{A}^s(g)^{\otimes
		(k-1)} \otimes \mathcal{B}^s(g) \otimes
	\mathcal{A}^s(g)^{\otimes(N+1-k)}$
	for $1 \leq k \leq N + 1$.
	Calculating the expressions appearing in $\underline{b}$, we have
	\begin{align*}
		\mathcal{A}^s(g) \mathcal{A}^s(g) & = (s \omega_g - s(1-s)\sigma
		\omega_g^2)(s \omega_g - s(1-s)\sigma \omega_g^2) \\
		& = s^2 \omega_g^2 -
		s\omega_g s(1-s)\sigma \omega_g^2 - s(1-s)\sigma \omega_g^2 s \omega_g
		+ s^2(1-s)^2(\sigma \omega_g^2)^2 \\
		& = s^2 \omega_g^2 +
		s^2(1-s)\sigma \omega_g^3 - s^2(1-s)\sigma \omega_g^3
		+ s^2(1-s)^2\sigma^2 \omega_g^4 \\
		& = s^2 \omega_g^2
	\end{align*}
	and
	\begin{equation*}
		\mathcal{A}^s(g) \mathcal{B}^s(g) = (s \omega_g - s(1-s)\sigma
		\omega_g^2)\sigma \omega_g = s \omega_g \sigma \omega_g = -s
		\sigma\omega_g^2
	\end{equation*}
	as well as
	\begin{equation*}
		\mathcal{B}^s(g) \mathcal{A}^s(g) = s \sigma \omega_g^2.
	\end{equation*}
	With a view on \cref{Definition bunderline}, we calculate (modulo two)
	\begin{equation*}
		\lvert \theta^{(k)}_{N+1} \rvert - 1 =
		\begin{cases}
			0, & \quad 1 \leq k \leq N, \\
			1, & \quad k = N+1
		\end{cases}
	\end{equation*}
	and
	\begin{equation*}
		m_N(\theta^{(k)}) =
		\begin{cases}
			1, & \quad 1 \leq k \leq N, \\
			0, & \quad k = N + 1.
		\end{cases}
	\end{equation*}
	so that for all $k$ we have $(-1)^{(\lvert \theta^{(k)}_{N+1} \rvert -
		1)m_N} = 1$.
	This yields
	\begin{align*}
		((\underline{d}_\T + \underline{b})\mathrm{Ch}_{N+1}(g))_N & = - \Tr
		\left(\int_0^1 \mathbf{1} \otimes \sum_{k=1}^N
		\sum_{i=0}^{k-2} \mathcal{A}^s(g)^{\otimes i} \otimes (+s^2
		\omega_g^2)
		\otimes \mathcal{A}^s(g)^{\otimes (k-i-2)} \right.\\
		& \left. \vphantom{\int_0^1}\qquad \qquad \qquad \qquad \otimes
		\sigma \omega_g
		\otimes  {\mathcal A}^s(g)^{\otimes (N-k)} \, \mathrm{d}s \right)\\
		&\quad+ \Tr\left(\int_0^1 \mathbf{1} \otimes \sum_{k=1}^N
		\sum_{i=0}^{N-k-1} \mathcal A^s(g)^{\otimes (k-1)} \otimes
		\sigma \omega_g \right.\\
		& \left. \vphantom{\int_0^1}\qquad \qquad \qquad \qquad \otimes
		\mathcal A^s(g)^{\otimes i}
		\otimes (+s^2\omega_g^2 ) \otimes
		{\mathcal A}^s(g)^{\otimes (N-k-i-1)} \, \mathrm{d}s \right)\\
		&\quad + \Tr\left(\int_0^1 \mathbf{1} \otimes \sum_{k=1}^N \mathcal
		A^s(g)^{\otimes (k-1)} \otimes 2s\sigma\omega_g^2 \otimes \mathcal
		A^s(g)^{\otimes (N-k)} \, \mathrm{d}s
		\right).
	\end{align*}
	Since $\frac{d}{ds} \mathcal{A}^s(g) = \omega_g + (2s-1)\sigma
	\omega_g^2$, it follows that
	\begin{align*}
		((\underline{d}_\T + \underline{b})\mathrm{Ch}(g))_N & = \Tr
		\left(\int_0^1 \mathbf{1} \otimes \sum_{k = 1}^N
		\mathcal{A}^s(g)^{\otimes(k-1)} \otimes \left(\frac{d}{ds}
		\mathcal{A}^s(g)\right) \otimes \mathcal{A}^s(g)^{\otimes (N-k)} \,
		\mathrm{d}s\right) \\
		& = \Tr\left(\int_0^1 \frac{d}{ds}(\mathbf{1} \otimes
		\mathcal{A}^s(g)^{\otimes N}) \, \mathrm{d}s\right) \\
		& = \Tr\big(\mathbf{1} \otimes \mathcal{A}^1(g)^{\otimes N}\big) -
		\Tr\big(\mathbf{1} \otimes \mathcal{A}^0(g)^{\otimes N}\big) \\
		& = \Tr(\mathbf{1} \otimes \omega_g^{\otimes N}).
	\end{align*}
	\begin{claim}
		Define\footnote{The last part of the proof in
			\cite{GüneysuCacciatori} implicitly uses that the tensor product
			is
			taken over $\Omega^0$ which is wrong.
			The following correction was communicated to me by Sergio
			Cacciatori.}
		\begin{align*}
			\lambda_2 & \coloneqq \Tr(\mathbf{1} \otimes dg^{-1} \otimes dg)
			\\
			\lambda_N & \coloneqq \Tr(\mathbf{1} \otimes
			\omega_g^{\otimes(N-2)}
			\otimes dg^{-1} \otimes dg), \quad N \geq 3
		\end{align*}
		in $\mathrm{C}(\Omega_{\mathbb{T}})$.
		Then,
		\begin{align*}
			\Tr(\mathbf{1} \otimes \omega_g) - \lambda_2 & \in
			\mathrm{D}^{\mathbb{T}}(\Omega)\\
			\Tr(\mathbf{1} \otimes \omega_g^{\otimes N}) + \lambda_N -
			\lambda_{N+1} & \in
			\mathrm{D}^{\mathbb{T}}(\Omega), \quad N \geq 2.
		\end{align*}
	\end{claim}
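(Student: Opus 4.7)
The plan is to handle the two identities by exhibiting their left-hand sides as elements of $\mathrm D^\T(\Omega)$, using the generating operators $T_i^{(f)}$, $S_i^{(f)}$, $R$, $S+\mathbf 1$ together with the closure of $\mathrm D^\T$ under the full differential $\underline d_\T + \underline b - \underline B$. The key algebraic input is the Maurer--Cartan--type identity $\omega_g \cdot g^{-1} = -dg^{-1}$, obtained by differentiating $g\cdot g^{-1} = \mathbf 1$. The $N\ge 2$ case will go through cleanly, and the $N=1$ case will require extra bookkeeping.

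For $N\ge 2$, I would expand the last factor as $(\omega_g)_{i_N}^{i_1} = \sum_\ell \bar g_{i_N}^\ell\,(dg)_\ell^{i_1}$ and apply $T_{N-1}^{(\bar g_{i_N}^\ell)}$ to the length-$(N+1)$ chain $(\mathbf 1,(\omega_g)_{i_1}^{i_2},\ldots,(\omega_g)_{i_{N-1}}^{i_N},(dg)_\ell^{i_1})$, summed over all indices. The image lies in $\mathrm D^\T$, and its three pieces are (i) $\Tr(\mathbf 1,\omega_g^{\otimes N})$ (the ``multiplied'' term), (ii) $-\Tr(\mathbf 1,\omega_g^{\otimes(N-2)},\omega_g\cdot g^{-1},dg) = -\Tr(\mathbf 1,\omega_g^{\otimes(N-2)},-dg^{-1},dg) = \lambda_N$ (using $\omega_g g^{-1} = -dg^{-1}$), and (iii) $-\lambda_{N+1}$ (the $df$-insertion). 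Collecting signs yields $\Tr(\mathbf 1,\omega_g^{\otimes N}) + \lambda_N - \lambda_{N+1}\in\mathrm D^\T$.

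For $N=1$, applying $T_0^{(\bar g_i^j)}$ to $(\mathbf 1,(dg)_j^i)$ and summing gives $\Tr(\mathbf 1,\omega_g) - \Tr(g^{-1},dg) - \lambda_2\in\mathrm D^\T$, so it suffices to prove $\Tr(g^{-1},dg)\in\mathrm D^\T$. Here my plan would combine several auxiliary facts: (a) $\Tr(\mathbf 1,g^{-1},dg)$, $\Tr(\mathbf 1,dg,g^{-1})\in\mathrm D^\T$ as sums of $S_0^{(\bar g_i^j)}$- and $S_1^{(\bar g_i^j)}$-images of $(\mathbf 1,(dg)_j^i)$; (b) $\Tr(dg,dg^{-1})\in\mathrm D^\T$, obtained by applying $T_0^{(\bar g_i^j)}$ to the 1-chain $((dg)_j^i)$ and noting that the two product terms cancel because $\sum_{i,j}\bar g_i^j(dg)_j^i = \sum_{i,j}(dg)_j^i\bar g_i^j = \Tr_{\mathrm{mat}}(\omega_g)$; and (c) $\Tr(dg,g^{-1})\in\mathrm D^\T$, obtained by computing $(\underline d_\T + \underline b - \underline B)\Tr(\mathbf 1,g^{-1},dg)\in\mathrm D^\T$ and subtracting the $T_0^{(\bar g)}$-identity (the difference turns out to be precisely $\Tr(dg,g^{-1})$). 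Feeding (b) through $\underline B$ (inside the full differential applied to $\Tr(dg,dg^{-1})$) yields $\Tr(\mathbf 1,dg,dg^{-1}) + \lambda_2\in\mathrm D^\T$, and then applying the full differential to the 3-chain $\Tr(\mathbf 1,dg,g^{-1})\in\mathrm D^\T$ and plugging in (a) and (c) should isolate $\Tr(g^{-1},dg)\in\mathrm D^\T$.

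The main obstacle is exactly this last extraction in the $N=1$ case: many of the candidate 2-chains and 3-chains generate a tightly interrelated web of congruences modulo $\mathrm D^\T$ (e.g.\ $\Tr(g^{-1},dg)\equiv\Tr(\mathbf 1,\omega_g)-\lambda_2$ and $\Tr(\mathbf 1,dg,dg^{-1})\equiv -\lambda_2$ each arise in multiple ways), so care is required to ensure one genuinely lands on $\Tr(g^{-1},dg)\in\mathrm D^\T$ rather than re-deriving an equivalent statement. I expect the $R$ and $S+\mathbf 1$ operators, which introduce $\sigma$ and make essential use of the acyclic extension, to be what ultimately breaks the apparent symmetry; checking the signs carefully in the $\sigma$-insertions is where the real work lies.
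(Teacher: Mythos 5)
Your $N\ge 2$ argument is correct and is essentially the paper's own proof: applying $T_{N-1}^{(\bar g_{i_N}^{\ell})}$ to $(\mathbf 1,(\omega_g)_{i_1}^{i_2},\ldots,(\omega_g)_{i_{N-1}}^{i_N},(dg)_\ell^{i_1})$ and summing is exactly the entrywise form of the relation $\mathbf 1\otimes\omega_g^{\otimes(N-2)}\otimes g^{-1}dg\otimes g^{-1}dg\equiv \mathbf 1\otimes\omega_g^{\otimes(N-2)}\otimes g^{-1}(dg)g^{-1}\otimes dg+\mathbf 1\otimes\omega_g^{\otimes(N-1)}\otimes dg^{-1}\otimes dg$ used in the paper, and your sign bookkeeping ($\omega_g g^{-1}=-dg^{-1}$) is right. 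The first step of your $N=1$ case is also the paper's: $\sum_{i,j}T_0^{((g^{-1})_i^j)}(\mathbf 1,(dg)_j^i)$ gives $\Tr(\mathbf 1\otimes\omega_g)-\Tr(g^{-1}\otimes dg)-\lambda_2\in\mathrm{D}^{\mathbb T}(\Omega)$, so everything reduces to $\Tr(g^{-1}\otimes dg)\in\mathrm{D}^{\mathbb T}(\Omega)$.

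It is precisely this last point where your proposal has a genuine gap, and the fears you voice at the end are justified. Your step (b) already needs $\sum_{i,j}\bar g_i^j(dg)_j^i=\sum_{i,j}(dg)_j^i\bar g_i^j$, i.e.\ graded commutativity of $\Omega$, which is not assumed in this paper (the claim is stated for arbitrary locally convex dg algebras); without it the two length-zero chains $(\mathrm{tr}(g^{-1}dg))$ and $(\mathrm{tr}((dg)g^{-1}))$ do not cancel. More seriously, the final extraction is circular: if one actually computes $(\underline d_{\mathbb T}+\underline b-\underline B)$ of the chains in (a) one finds, e.g., that the differential of $\Tr(\mathbf 1\otimes dg\otimes g^{-1})$ combined with your (c) and the $T_0^{(\bar g)}$-identity only reproduces $\Tr(\mathbf 1\otimes dg\otimes dg^{-1})+\lambda_2\in\mathrm{D}^{\mathbb T}(\Omega)$ again -- every relation generated this way is insensitive to whether $\Tr(\mathbf 1\otimes\omega_g)-\lambda_2$ itself lies in $\mathrm{D}^{\mathbb T}(\Omega)$, so $\Tr(g^{-1}\otimes dg)$ is never isolated; and the operators $R$ and $S+\mathbf 1$ do not help, since the chains involved contain no $\sigma$. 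The missing idea (the paper's one-line fix) is to apply the $i=N$ form of the $T$-operator with $f=g$ (not $g^{-1}$) to the single-entry chain $(g^{-1})$: $T_0^{(g)}(g^{-1})=(gg^{-1})-(g^{-1}g)-(g^{-1},dg)=-(g^{-1},dg)$, the two multiplication terms being $(\mathbf 1)$ and hence cancelling by unitality alone (entrywise, after tracing, they are the diagonal entries of $gg^{-1}=\mathbf 1$ and $g^{-1}g=\mathbf 1$), so $\Tr(g^{-1}\otimes dg)\in\mathrm{D}^{\mathbb T}(\Omega)$ with no commutativity assumption, which finishes the $N=1$ case immediately.
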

	\begin{proof}[Proof of Claim]
		For $N = 1$, we have
		\begin{equation*}
			\mathbf{1} \otimes \omega_g = \mathbf{1} \otimes g^{-1}dg =
			g^{-1} \otimes dg + \mathbf{1} \otimes dg^{-1} \otimes dg
		\end{equation*}
		modulo $\mathrm{D}^{\mathbb{T}}(\mathrm{Mat}_m(\Omega))$.
		However, since $g^{-1} \otimes dg = -T_0^{(g)}(g^{-1})$ (see
		\cref{Section: chen normalization} for the definition of $T_0^{(g)}$)
		it follows that
		\begin{equation*}
			\mathbf{1} \otimes \omega_g - \mathbf{1} \otimes dg^{-1} \otimes
			dg \in \mathrm{D}^{\mathbb{T}}(\mathrm{Mat}_m(\Omega))
		\end{equation*}
		which implies $\Tr(\mathbf{1} \otimes \omega_g) - \lambda_2 \in
		\mathrm{D}^{\mathbb{T}}(\Omega)$.

		For $N \geq 2$, we have
		\begin{align*}
			\mathbf{1} \otimes \omega_g^{\otimes N} & = \mathbf{1} \otimes
			\omega_g^{\otimes (N-2)} \otimes g^{-1}dg \otimes g^{-1} dg \\
			& = \mathbf{1} \otimes \omega_g^{\otimes (N-2)} \otimes
			g^{-1}(dg)g^{-1} \otimes dg + \mathbf{1} \otimes
			\omega_g^{\otimes(N-1)} \otimes dg^{-1} \otimes dg \\
			& = -\mathbf{1} \otimes
			\omega_g^{\otimes (N-2)} \otimes dg^{-1} \otimes dg + \mathbf{1}
			\otimes \omega_g^{\otimes(N-1)} \otimes dg^{-1} \otimes dg
		\end{align*}
		modulo $\mathrm{D}^{\mathbb{T}}(\mathrm{Mat}_m(\Omega))$ where we
		used \cref{dg^-1}.
		Hence,
		\begin{equation*}
			\Tr(\mathbf{1} \otimes \omega_g^{\otimes N}) + \lambda_N -
			\lambda_{N+1} \in
			\mathrm{D}^{\mathbb{T}}(\Omega)
		\end{equation*}
		as desired.
	\end{proof}
	Similarly to \cref{Seminorm Ch(g)}, one sees that the $\lambda_N$ form an
	entire chain so that in particular, the sequence $(\lambda_N)_N$
	converges to $0$ in
	$\mathrm{C}^\epsilon(\Omega_{\mathbb{T}})$ as $N \to \infty$.
	Hence, we may write
	\begin{equation*}
		0 = - \lambda_2 + \sum_{N = 2}^\infty (\lambda_N - \lambda_{N+1})
	\end{equation*}
	in $\mathrm{C}^\epsilon(\Omega_{\mathbb{T}})$ which together with above
	claim implies $(\underline{d}_\T +
	\underline{b})\mathrm{Ch}(g) \in
	\overline{\mathrm{D}^{\mathbb{T}}(\Omega)}$.
\end{proof}
\subsection{The spectral flow of an odd Fredholm module}\label{Section:
	spectral
	flow}
We want to make sense of the right-hand side of \cref{Spectral flow pairing}
in our abstract setting.
To this end, let $\mathscr{M} = (\HH, \mathbf{c}, Q)$ be an odd
$\vartheta$-summable Fredholm module over the locally convex dg algebra
$\Omega$ and $\mathscr{M}_{\mathbb{T}}$ its acyclic extension as discussed in
\cref{Acyclic extension FM}.
As before, this gives rise to an odd
$\vartheta$-summable weak Fredholm
module $\mathscr{M}_{\mathbb{T}}^{(m)} = (\HH^m, \mathbf{c}_{\mathbb{T}, m},
Q_m)$ over the locally convex dg algebra
$\mathrm{Mat}_m(\Omega_{\mathbb{T}})$.
Recall that if no grading is specified, we assume all elements to have
degree $0$ so that in particular, the supercommutator $[\cdot, \cdot]$
reduces to the ordinary
commutator whereas the notation $[[\cdot, \cdot]]$ will always be used for
anticommutators.
Define unbounded operators
\begin{align*}
	Q_{g,s} & \coloneqq (1-s)Q_m + s \cdot \mathbf{c}_{\T, m}(g^{-1}) Q_m
	\mathbf{c}_{\T, m}(g) = Q_m + s \cdot \mathbf{c}_{\T, m}(g^{-1})[Q_m,
	\mathbf{c}_{\T, m}(g)]\\
	\dot{Q}_{g,s} & \coloneqq \mathbf{c}_{\T, m}(g^{-1})[Q_m, \mathbf{c}_{\T,
		m}(g)]
\end{align*}
on $\HH^m$ for $s \in I$.
Since $g, g^{-1} \in \mathrm{Mat}_m(\Omega^0)$ it follows
that $[Q_m, \mathbf{c}_{\T, m}(g)] = \mathbf{c}_{\T, m}(d g)$ and therefore,
\begin{equation*}
	\mathbf{c}_{\T, m}(g^{-1})[Q_m, \mathbf{c}_{\T, m}(g)] = \mathbf{c}_{\T,
		m}(g^{-1}) \mathbf{c}_{\T, m}(dg) =
	\mathbf{c}_{\T, m}(g^{-1}dg) = \mathbf{c}_{\T, m}(\omega).
\end{equation*}
Hence,
\begin{equation*}
	Q_{g,s} = Q_m + s \cdot \mathbf{c}_{\T, m}(\omega), \quad \dot{Q}_{g,s} =
	\mathbf{c}_{\T, m}(\omega)
\end{equation*}
and
\begin{equation*}
	Q_{g,s}^2 = Q_m^2 + s[[Q_m, \mathbf{c}_{\T, m}(\omega)]] + s^2
	\mathbf{c}_{\T, m}(\omega)^2 = Q_m^2 + X_s,
\end{equation*}
where $X_s \coloneqq s[[Q_m, \mathbf{c}_{\T, m}(\omega)]] + s^2
\mathbf{c}_{\T, m}(\omega)^2$.
\begin{lem}
	The operator $Q_{g,s}^2$ generates a strongly continuous semigroup of
	operators on $\HH^m$ with infinitesimal generator $-Q_{g,s}^2$ which is
	given the perturbation series
	\begin{equation}\label{Perturbation series}
		e^{-TQ_{g,s}^2} = e^{-T(Q_m^2 + X_s)} = \sum_{M=0}^{\infty} (-T)^M
		\{\underbrace{X_s, \ldots, X_s}_M\}_{TQ^2}
	\end{equation}
	for $T > 0$.
	Moreover, the operator $\dot{Q}_{g,s} e^{-TQ_{g,s}^2}$ is well-defined
	and trace-class.
\end{lem}
\begin{proof}
	Due to the analytic requirement (A1) in \cref{Definition odd FM}, the
	operators $X_s$ satisfy
	the assumptions from \cite[Lemma 4.2]{GüneysuLudewig} with $a_j = \frac
	12$ for $1 \leq j
	\leq M$ and the same is true for $\dot{Q}_{g,s} =
	\mathbf{c}_{\T,m}(\omega)$
	with $a_0 = 0$ since $\mathbf{c}_{\T, m}$ maps to bounded operators.
	Thus,
	\begin{equation*}
		\dot{Q}_{g,s}\{\underbrace{X_s, \ldots, X_s}_{M}\}_{TQ_m^2}
	\end{equation*}
	is a well-defined trace-class operator for each $T > 0$ and the estimates
	from \cite[Lemma 4.2]{GüneysuLudewig} show that the sum in
	\cref{Perturbation series}
	converges in trace norm and defines a strongly continuous semigroup with
	infinitesimal generator $-Q_{g,s}^2$.\footnote{See also
		\cite[Chapter Nine, §2.1]{Kato}.}
\end{proof}
Thus, the integral
\begin{equation*}
	\int_0^1 \Tr(\dot{Q}_{g,s}e^{-Q_{g,s}^2}) \, \mathrm{d}s
\end{equation*}
is well-defined when using the perturbation series for $e^{-Q_{g,s}^2}$.
Note that if $\mathbf{c}(g) \in \Lop(\HH)$ is unitary, the above integral
equals $\sqrt{\pi} \cdot \mathrm{sf}(Q, g^{-1}Qg)$ as shown in
\cite[Corollary 2.7]{GetzlerOdd}.
\subsection{The pairing \texorpdfstring{$\langle \ChernT,
		\mathrm{Ch}(g) \rangle$}{<ChMT, Ch(g)>}}\label{Section: spectral flow
	pairing 1}
Recall that
\begin{align*}
	\mathrm{Ch}_N(g) & \coloneqq \Tr \left(\int_0^1 \mathbf{1} \otimes
	\sum_{k=1}^{N} \mathcal{A}^s(g)^{\otimes(k-1)} \otimes
	\mathcal{B}^s(g)
	\otimes \mathcal{A}^s(g)^{\otimes(N-k)} \, \mathrm{d}s\right)\\
	& = \int_0^1 \Tr \left(\mathbf{1} \otimes
	\sum_{k=1}^{N} \mathcal{A}^s(g)^{\otimes(k-1)} \otimes
	\mathcal{B}^s(g)
	\otimes \mathcal{A}^s(g)^{\otimes(N-k)}\right)\, \mathrm{d}s \\
	& = \int_0^1 \mathbf{1} \otimes \Tr \left(
	\sum_{k=1}^{N} \mathcal{A}^s(g)^{\otimes(k-1)} \otimes
	\mathcal{B}^s(g)
	\otimes \mathcal{A}^s(g)^{\otimes(N-k)}\right) \, \mathrm{d}s
\end{align*}
for $N \geq 1$.
\cref{Doubling acyclic} shows that
$\tilde{\mathscr{M}}_{\mathbb{T}} = \widetilde{\mathscr{M}_{\mathbb{T}}}$,
and it is easy to check that
\begin{equation*}
	U \colon \widetilde{\HH^m} \to \tilde{\HH}^m, \quad (v^{+}, v^{-})
	\mapsto \big((v^{+}_1, v^{-}_{1}), \ldots, (v^{+}_m, v^{-}_m)\big)
\end{equation*}
induces an isomorphism between $\widetilde{\mathscr{M}^{(m)}_{\mathbb{T}}} =
(\widetilde{\HH^m}, \widetilde{\mathbf{c}_{\T, m}}, \widetilde{Q_m})$ and
$\tilde{\mathscr{M}}_{\mathbb{T}}^{(m)} = (\tilde{\HH}^m,
\tilde{\mathbf{c}}_{\T, m}, \tilde{Q}_m)$ as $\vartheta$-summable weak
$\mathcal{C}_1$-Fredholm modules over $\mathrm{Mat}_m(\Omega_{\mathbb{T}})$.
Since their Chern characters
coincide according to \cref{Isomorphism Chern character}, we will identify
the two Fredholm modules from now on.
Hence, using the identity
\begin{equation}\label{Str clif1 tr}
	\mathrm{Str}_{\mathcal{C}_1, \tilde{\HH}}\left((\tilde{\mathbf{c}}_\T
	\Phi_1^{\tilde{\mathscr{M}}_{\mathbb{T}}})(\Tr(\theta_0, \ldots,
	\theta_N))\right) = \mathrm{Str}_{\mathcal{C}_1,
		\tilde{\HH}^m}\left(\tilde{\mathbf{c}}_{\T, m}(\theta_0)
	\Phi_1^{\tilde{\mathscr{M}}_{\mathbb{T}}^{(m)}}(\theta_1, \ldots,
	\theta_N)\right),
\end{equation}
we have
\begin{align}
	&\langle \mathrm{Ch}_{\mathscr{M}_{\mathbb{T}}},
	\mathrm{Ch}(g) \rangle\notag \\
	& \qquad = \sum_{N=0}^{\infty} \int_0^1
	\mathrm{Str}_{\mathcal{C}_1,
		\tilde{\HH}^m}\left(\tilde{\mathbf{c}}_{\mathbb{T}, m}(\mathbf{1})
	\Phi_1^{\tilde{\mathscr{M}}_{\mathbb{T}}^{(m)}}\left(
	\sum_{k=1}^{N} \mathcal{A}^s(g)^{\otimes(k-1)} \otimes
	\mathcal{B}^s(g)
	\otimes \mathcal{A}^s(g)^{\otimes(N-k)}
	\right)\right) \notag\\
	& \qquad = \sum_{N=0}^{\infty} \int_0^1
	\mathrm{Str}_{\mathcal{C}_1, \tilde{\HH}^m}\left(
	\sum_{k=1}^{N}\Phi_1^{\tilde{\mathscr{M}}_{\mathbb{T}}^{(m)}}\left(
	\mathcal{A}^s(g)^{\otimes(k-1)} \otimes
	\mathcal{B}^s(g)
	\otimes \mathcal{A}^s(g)^{\otimes(N-k)}
	\right)\right) \notag\\
	& \qquad = \sum_{N=0}^{\infty} \int_0^1
	\frac 12 \mathrm{Tr}_{\tilde{\HH}^m}\left(\tilde{\gamma}
	\sum_{k=1}^{N}\Phi_1^{\tilde{\mathscr{M}}_{\mathbb{T}}^{(m)}}\left(
	\mathcal{A}^s(g)^{\otimes(k-1)} \otimes
	\mathcal{B}^s(g)
	\otimes \mathcal{A}^s(g)^{\otimes(N-k)}
	\right)\right)\label{Pairing}
\end{align}
where
\begin{equation*}
	\tilde{\gamma} \coloneqq \begin{pmatrix}
		0 & \mathbf{1}_{\HH} \\
		\mathbf{1}_{\HH} & 0 \\
		& & \ddots \\
		& & & 0 & \mathbf{1}_{\HH} \\
		& & & \mathbf{1}_{\HH} & 0\\
	\end{pmatrix} \in \Lop(\tilde{\HH}^m).
\end{equation*}
To proceed further, we have to calculate the expressions involving
$\Phi_1^{\tilde{\mathscr{M}}_{\mathbb{T}}^{(m)}}$:
\begin{align*}
	& \Phi_1^{\tilde{\mathscr{M}}_{\mathbb{T}}^{(m)}}
	\left(\mathcal{A}^s(g)^{\otimes(k-1)} \otimes
	\mathcal{B}^s(g)
	\otimes \mathcal{A}^s(g)^{\otimes(N-k)}\right) \\
	& \qquad = \sum_{M = 1}^N (-1)^M \sum_{J \in \mathcal{P}_{M,N}}
	\{F_{\tilde{\mathscr{M}}_{\mathbb{T}}^{(m)}}(\theta_{J_1}),
	\ldots,
	F_{\tilde{\mathscr{M}}_{\mathbb{T}}^{(m)}}(\theta_{J_M})\}_
	{\tilde{Q}_m^2}
\end{align*}
where $\theta_1 \otimes \ldots \otimes \theta_N \coloneqq
\mathcal{A}^s(g)^{\otimes(k-1)} \otimes \mathcal{B}^s(g)
\otimes \mathcal{A}^s(g)^{\otimes(N-k)}$.
By definition of the bracket, we have to evaluate the following expressions:
\begin{align*}
	& F_{\tilde{\mathscr{M}}_{\mathbb{T}}^{(m)}}(\mathcal{A}^s(g)), \quad
	F_{\tilde{\mathscr{M}}_{\mathbb{T}}^{(m)}}(\mathcal{A}^s(g),
	\mathcal{B}^s(g)), \quad F_{\tilde{\mathscr{M}}_{\mathbb{T}}^{(m)}}(
	\mathcal{B}^s(g), \mathcal{A}^s(g)), \\
	& F_{\tilde{\mathscr{M}}_{\mathbb{T}}^{(m)}}(\mathcal{B}^s(g)), \quad
	F_{\tilde{\mathscr{M}}_{\mathbb{T}}^{(m)}}(
	\mathcal{A}^s(g), \mathcal{A}^s(g)).
\end{align*}
We calculate:
\begin{align*}
	F_{\tilde{\mathscr{M}}_{\mathbb{T}}^{(m)}}(\mathcal{A}^s(g)) & =
	F_{\tilde{\mathscr{M}}_{\mathbb{T}}^{(m)}}(s\omega - s(1-s)\sigma
	\omega^2) \\
	& = s[\tilde{Q}_m,
	\tilde{\mathbf{c}}_{\T, m}(\omega)] - s\tilde{\mathbf{c}}_{\T, m}(d
	\omega) -
	s(1-s)\tilde{\mathbf{c}}_{\T, m}(\omega^2) \\
	& =  s[\tilde{Q}_m,
	\tilde{\mathbf{c}}_{\T, m}(\omega)] + s\tilde{\mathbf{c}}_{\T,
		m}(\omega^2) -
	s(1-s)\tilde{\mathbf{c}}_{\T, m}(\omega^2) \\
	& = s[\tilde{Q}_m,
	\tilde{\mathbf{c}}_{\T, m}(\omega)] +
	s^2\tilde{\mathbf{c}}_{\T, m}(\omega^2)
\end{align*}
where we used the Maurer-Cartan equation.
Note that
\begin{equation*}
	F_{\tilde{\mathscr{M}}_{\mathbb{T}}^{(m)}}(\mathcal{A}^s(g),
	\mathcal{B}^s(g)) = (-1)^{\lvert \omega \rvert} (\tilde{\mathbf{c}}_{\T,
		m}(s\omega
	\cdot 0) - \tilde{\mathbf{c}}_{\T, m}(s \omega) \tilde{\mathbf{c}}_{\T,
		m}(0)) = 0
\end{equation*}
and the same is true for $F(\mathcal{B}^s(g), \mathcal{A}^s(g))$.
Furthermore,
\begin{align*}
	F_{\tilde{\mathscr{M}}_{\mathbb{T}}^{(m)}}(\mathcal{A}^s(g),
	\mathcal{A}^s(g)) & = (-1)^{\lvert \omega
		\rvert}(\tilde{\mathbf{c}}_{\T, m}(s^2 \omega^2) - s^2
	\tilde{\mathbf{c}}_{\T, m}(\omega)
	\tilde{\mathbf{c}}_{\T, m}(\omega)) \\
	& = -(s^2\tilde{\mathbf{c}}_{\T, m}(\omega^2) - s^2
	\tilde{\mathbf{c}}_{\T, m}(\omega)^2)
\end{align*}
and finally,
\begin{equation*}
	F_{\tilde{\mathscr{M}}_{\mathbb{T}}^{(m)}}(\mathcal{B}^s(g)) =
	F_{\tilde{\mathscr{M}}_{\mathbb{T}}^{(m)}}(\sigma \omega) =
	\tilde{\mathbf{c}}_{\T, m}(\omega).
\end{equation*}
Define $\tilde{X}_s \coloneqq s[\tilde{Q}_m, \tilde{\mathbf{c}}_{\mathbb{T},
	m}(\omega)] + s^2\tilde{\mathbf{c}}_{\mathbb{T}, m}(\omega)^2$.
Using the above results, we notice that
\begin{align*}
	F_{\tilde{\mathscr{M}}_{\mathbb{T}}^{(m)}}(\mathcal{A}^s(g),
	\mathcal{A}^s(g)) +
	F_{\tilde{\mathscr{M}}_{\mathbb{T}}^{(m)}}(\mathcal{A}^s(g)) & = s^2
	\tilde{\mathbf{c}}_{\mathbb{T}, m}(\omega)^2 -
	s^2\tilde{\mathbf{c}}_{\mathbb{T}, m}(\omega^2) \\
	& \qquad + s[\tilde{Q}_m, \tilde{\mathbf{c}}_{\mathbb{T}, m}(\omega)] +
	s^2\tilde{\mathbf{c}}_{\mathbb{T}, m}(\omega^2) \\
	& = s[\tilde{Q}_m,
	\tilde{\mathbf{c}}_{\mathbb{T}, m}(\omega)] + s^2
	\tilde{\mathbf{c}}_{\mathbb{T}, m}(\omega)^2 =
	\tilde{X}_s.
\end{align*}
\begin{lem}
	The following identity holds:
	\begin{equation*}
		\sum_{M = 0}^{\infty} (-T)^M \{\underbrace{\tilde{X}_s, \ldots,
			\tilde{X}_s}_M\}_{T\tilde{Q}^2} = \sum_{N=0}^{\infty}
		\Phi_T^{\tilde{\mathscr{M}}_{\mathbb{T}}^{(m)}}
		(\underbrace{\mathcal{A}^s(g),
			\ldots, \mathcal{A}^s(g)}_N).
	\end{equation*}
\end{lem}
\begin{proof}
	Using multi-linearity of the bracket, we have
	\footnotesize
	\begin{align*}
		& \sum_{M = 0}^\infty (-T)^M \{\underbrace{\tilde{X}_s, \ldots,
			\tilde{X}_s}_M\}_{T\tilde{Q}^2} \\
		& \quad = \sum_{M = 0}^\infty (-T)^M
		\{F_{\tilde{\mathscr{M}}_{\mathbb{T}}^{(m)}}(\mathcal{A}^s(g),
		\mathcal{A}^s(g)) +
		F_{\tilde{\mathscr{M}}_{\mathbb{T}}^{(m)}}(\mathcal{A}^s(g)), \ldots,
		F_{\tilde{\mathscr{M}}_{\mathbb{T}}^{(m)}}(\mathcal{A}^s(g),
		\mathcal{A}^s(g)) +
		F_{\tilde{\mathscr{M}}_{\mathbb{T}}^{(m)}}(\mathcal{A}^s(g))\}
		_{T\tilde{Q}^2} \\
		& \quad = \sum_{M = 0}^\infty \sum_{l=0}^M (-T)^M \sum_{J \in
			\mathcal{P}_M^{(l)}}
		\{F_{\tilde{\mathscr{M}}_{\mathbb{T}}^{(m)}}(\theta_{J_1}),
		\ldots, F_{\tilde{\mathscr{M}}_{\mathbb{T}}^{(m)}}(\theta_{J_M})\}
		_{T\tilde{Q}^2}
	\end{align*}
	\normalsize
	where $\mathcal{P}_M^{(l)}$ is the set of partitions $J \in
	\mathcal{P}_{M, 2(M-l)+l}$ such that there exist $i_1, \ldots, i_l \in
	\{1, \ldots, M\}$ for which $\theta_{J_{r}} = \mathcal{A}^s(g)$ for $r
	\in \{i_1, \ldots, i_l\}$ and $\theta_{J_{r}} = (\mathcal{A}^s(g),
	\mathcal{A}^s(g))$ for $r \in \{1, \ldots, M\} \setminus \{i_1,
	\ldots, i_l\}$.
	Substituting $K = 2(M-l) + l$ yields
	\begin{align*}
		& \sum_{l=0}^M (-T)^M \sum_{J \in
			\mathcal{P}_M^{(l)}}
		\{F_{\tilde{\mathscr{M}}_{\mathbb{T}}^{(m)}}(\theta_{J_1}),
		\ldots, F_{\tilde{\mathscr{M}}_{\mathbb{T}}^{(m)}}(\theta_{J_M})\}
		_{T\tilde{Q}^2} \\
		& \qquad = (-T)^M
		\sum_{K=M}^{2M} \sum_{J \in \mathcal{P}_{M, K}}
		\{F_{\tilde{\mathscr{M}}_{\mathbb{T}}^{(m)}}(\theta_{J_1}),
		\ldots, F_{\tilde{\mathscr{M}}_{\mathbb{T}}^{(m)}}(\theta_{J_M})\}
		_{T\tilde{Q}^2}
	\end{align*}
	so that we have
	\begin{align*}
		& \sum_{M = 0}^\infty (-T)^M \{\underbrace{\tilde{X}_s, \ldots,
			\tilde{X}_s}_M\}_{T\tilde{Q}^2} \\
		& \qquad = \sum_{M = 0}^{\infty} (-T)^M \sum_{K=M}^{2M} \sum_{J
			\in \mathcal{P}_{M, K}}
		\{F_{\tilde{\mathscr{M}}_{\mathbb{T}}^{(m)}}(\theta_{J_1}),
		\ldots, F_{\tilde{\mathscr{M}}_{\mathbb{T}}^{(m)}}(\theta_{J_M})\}
		_{T\tilde{Q}^2}
		\\
		& \qquad = \sum_{M = 0}^{\infty} (-T)^M \sum_{K=0}^{M}
		\sum_{J \in \mathcal{P}_{M, K+M}}
		\{F_{\tilde{\mathscr{M}}_{\mathbb{T}}^{(m)}}(\theta_{J_1}),
		\ldots, F_{\tilde{\mathscr{M}}_{\mathbb{T}}^{(m)}}(\theta_{J_M})\}
		_{T\tilde{Q}^2}
		\\
		& \qquad = \sum_{N = 0}^{\infty} \sum_{L=0}^{N}
		(-T)^L \sum_{J \in \mathcal{P}_{L, N}}
		\{F_{\tilde{\mathscr{M}}_{\mathbb{T}}^{(m)}}(\theta_{J_1}),
		\ldots, F_{\tilde{\mathscr{M}}_{\mathbb{T}}^{(m)}}(\theta_{J_L})\}
		_{T\tilde{Q}^2} \\
		& \qquad = \sum_{N=0}^{\infty}
		\Phi_T^{\tilde{\mathscr{M}}_{\mathbb{T}}^{(m)}}
		(\underbrace{\mathcal{A}^s(g),
			\ldots, \mathcal{A}^s(g)}_N)
	\end{align*}
	as desired
\end{proof}
\subsection{Relating the spectral flow to the pairing}\label{Section:
	spectral flow pairing 2}
Since $\omega$ is odd in $\mathrm{Mat}_m(\Omega_{\mathbb{T}})$, we have
\begin{equation*}
	\widetilde{\mathbf{c}_{\mathbb{T}, m}}(\omega) =
	\begin{pmatrix}
		0 & \mathbf{c}_{\mathbb{T}, m}(\omega) \\
		\mathbf{c}_{\mathbb{T}, m}(\omega) & 0
	\end{pmatrix} \in \Lop(\widetilde{\HH^m})
\end{equation*}
where
\begin{equation*}
	\mathbf{c}_{\mathbb{T}, m}(\omega) =
	\begin{pmatrix}
		\mathbf{c}(\omega_{1}^1) &  \dots & \mathbf{c}(\omega_{m}^1) \\
		\vdots & \ddots & \vdots \\
		\mathbf{c}(\omega_{1}^m)  & \dots & \mathbf{c}(\omega_{m}^m)
	\end{pmatrix} \in \Lop(\HH^m).
\end{equation*}
Similarly,
\begin{equation*}
	\widetilde{Q_m} =
	\begin{pmatrix}
		0 & Q_m \\
		Q_m & 0
	\end{pmatrix}
\end{equation*}
on $\widetilde{\HH^m}$ where
\begin{equation*}
	Q_m =
	\begin{pmatrix}
		Q & & 0 \\
		& \ddots & \\
		0  & & Q
	\end{pmatrix}
\end{equation*}
on $\HH^m$.
Therefore,
\begin{align*}
	[\widetilde{Q_m}, \widetilde{\mathbf{c}_{\mathbb{T}, m}}(\omega)] & =
	\widetilde{Q_m}\widetilde{\mathbf{c}_{\mathbb{T}, m}}(\omega) -
	(-1)^{\lvert
		\widetilde{Q_m} \rvert \lvert \widetilde{\mathbf{c}_{\mathbb{T},
				m}}(\omega) \rvert}\widetilde{\mathbf{c}_{\mathbb{T},
			m}}(\omega)
	\widetilde{Q_m} \\
	&=
	\widetilde{Q_m}\widetilde{\mathbf{c}_{\mathbb{T}, m}}(\omega) +
	\widetilde{\mathbf{c}_{\mathbb{T}, m}}(\omega) \widetilde{Q_m} \\
	& =
	\begin{pmatrix}
		[[Q_m, \mathbf{c}_{\mathbb{T}, m}(\omega)]] & 0 \\
		0 & [[Q_m, \mathbf{c}_{\mathbb{T}, m}(\omega)]]
	\end{pmatrix}
\end{align*}
on $\widetilde{\HH^m}$.
Hence, recalling that $U$ is an isomorphism of Fredholm modules, we have
\begin{equation*}
	\tilde{X_s}U = U\big(s[\widetilde{Q_m},
	\widetilde{\mathbf{c}_{\mathbb{T}, m}}(\omega)]
	+ s^2 \widetilde{\mathbf{c}_{\mathbb{T}, m}}(\omega)^2 \big) = U
	\begin{pmatrix}
		X_s & 0 \\
		0 & X_s
	\end{pmatrix}
\end{equation*}
on $\widetilde{\HH^m}$.
Putting everything together, we see that
\begin{align*}
	& \frac 12 \Tr_{\tilde{\HH}^m}\left(\tilde{\gamma}
	\tilde{\mathbf{c}}_{\mathbb{T},
		m}(\omega)e^{-\tau_1\tilde{Q}_{m}^2}\tilde{X}_{s}e^{-(\tau_2
		- \tau_1)\tilde{Q}_{m}^2} \cdot \ldots \cdot e^{-(\tau_{N} -
		\tau_{N-1})\tilde{Q}_{m}^2}\tilde{X}_{s}
	e^{-(1-\tau_{N})\tilde{Q}_{m}^2}
	\right) \\
	& = \frac 12 \Tr_{\widetilde{\HH^m}}\left(U^{-1}\tilde{\gamma}
	\tilde{\mathbf{c}}_{\mathbb{T},
		m}(\omega)e^{-\tau_1\tilde{Q}_{m}^2}\tilde{X}_{s}e^{-(\tau_2
		- \tau_1)\tilde{Q}_{m}^2} \cdot \ldots \cdot e^{-(\tau_{N} -
		\tau_{N-1})\tilde{Q}_{m}^2}\tilde{X}_{s}
	e^{-(1-\tau_{N})\tilde{Q}_{m}^2}
	U\right) \\
	& = \frac 12 \Tr_{\widetilde{\HH^m}}\left(U^{-1}\tilde{\gamma}U
	\widetilde{\mathbf{c}_{\mathbb{T},
			m}}(\omega)e^{-\tau_1\widetilde{Q_m}^2}\mathrm{diag}(X_s) \cdot
	\ldots
	\cdot\mathrm{diag}(X_s)
	e^{-(1-\tau_{N})\widetilde{Q_m}^2}
	\right) \\
	& = \Tr_{\HH^m}\left(\mathbf{c}_{\mathbb{T}, m}(\omega)e^{-\tau_1
		Q_{m}^2}X_{s}e^{-(\tau_2
		- \tau_1) Q_{m}^2} \cdot \ldots \cdot e^{-(\tau_{N} -
		\tau_{N-1}) Q_{m}^2}X_{s}
	e^{-(1-\tau_{N}) Q_{m}^2}
	\right)
\end{align*}
where we used that
\begin{equation*}
	U^{-1} \tilde{\gamma} U =
	\begin{pmatrix}
		0 & \mathbf{1}_m \\
		\mathbf{1}_m & 0
	\end{pmatrix}
\end{equation*}
on $\widetilde{\HH^m}$.
It follows that
\begin{align*}
	& \int_0^1 \Tr_{\HH^m}(\dot{Q}_{g,s} e^{-Q_{g,s}^2}) \, \mathrm{d}s \\
	& \quad = \int_0^1 \Tr_{\HH^m} \left(\sum_{M = 0}^\infty (-1)^M
	\mathbf{c}_{\T, m}(\omega) \{X_s, \ldots, X_s\}_{Q_m^2}\right) \\
	& \quad =
	\int_0^1 \Tr_{\HH^m}\left(\sum_{M = 0}^{\infty}(-1)^M \int_{\Delta_M}
	\mathbf{c}_{\T, m}(\omega)e^{-\tau_1Q_m^2}X_s e^{-(\tau_2
		- \tau_1)Q_m^2} \cdot \ldots \right.\\
	& \qquad \qquad \qquad \qquad \qquad \qquad \left.
	\vphantom{\sum_{M=0}^{\infty}} \ldots
	\cdot
	e^{-(\tau_M -
		\tau_{M-1})Q_m^2}X_s e^{-(1-\tau_M)Q_m^2}\, \mathrm{d} \tau \right)\,
	\mathrm{d}s \\
	& \quad = \int_0^1 \frac 12 \Tr_{\tilde{\HH}^m}\left(\sum_{M =
		0}^{\infty}(-1)^M \int_{\Delta_M}
	\tilde{\gamma} \tilde{\mathbf{c}}_{\mathbb{T},
		m}(\omega)e^{-\tau_1\tilde{Q}_m^2}\tilde{X}_s e^{-(\tau_2
		- \tau_1)\tilde{Q}_m^2} \cdot \ldots \right.\\
	& \qquad \qquad \qquad \qquad \qquad \qquad \left.
	\vphantom{\sum_{M=0}^{\infty}}\ldots \cdot
	e^{-(\tau_M -
		\tau_{M-1})\tilde{Q}_m^2}\tilde{X}_s
	e^{-(1-\tau_M)\tilde{Q}_m^2}\, \mathrm{d} \tau \right)\,
	\mathrm{d}s \\
	& \quad = \int_0^1
	\frac 12 \Tr_{\tilde{\HH}^m}\left(\tilde{\gamma}
	\tilde{\mathbf{c}}_{\mathbb{T},
		m}(\omega) \sum_{M = 0}^{\infty}(-1)^M \{\underbrace{\tilde{X}_s,
		\ldots,
		\tilde{X}_s}_M\}_{\tilde{Q}^2_m} \right)\, \mathrm{d}s \\
	& \quad = \int_0^1
	\frac 12 \Tr_{\tilde{\HH}^m}\left(\tilde{\gamma}
	\tilde{\mathbf{c}}_{\mathbb{T},
		m}(\omega) \sum_{N=0}^{\infty}
	\Phi_1^{\tilde{\mathscr{M}}_{\mathbb{T}}^{(m)}}(\underbrace{\mathcal{A}^s(g),
		\ldots, \mathcal{A}^s(g)}_N) \right)\, \mathrm{d}s \\
	& \quad = \sum_{N=0}^{\infty} \int_0^1 \frac 12
	\Tr_{\tilde{\HH}^m}\Big(\tilde{\gamma}\tilde{\mathbf{c}}_{\mathbb{T},
		m}(\omega)
	\Phi_1^{\tilde{\mathscr{M}}_{\mathbb{T}}^{(m)}}(\mathcal{A}^s(g),
	\ldots, \mathcal{A}^s(g)) \Big)\, \mathrm{d}s
\end{align*}
where we used Fubini's theorem in the last step which is justified by the
estimates in \cref{Fundamental estimate}.
The following statement follows from the proof of \cref{Clifford trace cyclic
sum property}:
\begin{lem}
	For operators $A_1, \ldots, A_{N-1}$ and $B$ satisfying the assumptions
	from \cite[Lemma 4.2]{GüneysuLudewig} and commuting with
	$\tilde{\gamma}$, we have
	\begin{equation*}
		\sum_{k=1}^N \Tr_{\tilde{\HH}^m}(\tilde{\gamma} \{A_k, \ldots,
		A_{N-1},
		B, A_1,
		\ldots, A_{k-1}\}_{\tilde{Q}_m^2}) =
		\Tr_{\tilde{\HH}^m}(\tilde{\gamma} B \{A_1, \ldots,
		A_{N-1}\}_{\tilde{Q}_m^2}).
	\end{equation*}
\end{lem}
By construction, the operators appearing in the brackets of
\begin{equation*}
	\Phi_1^{\tilde{\mathscr{M}}_{\mathbb{T}}^{(m)}}\left(
	\mathcal{A}^s(g)^{\otimes(k-1)} \otimes
	\mathcal{B}^s(g)
	\otimes \mathcal{A}^s(g)^{\otimes(N-k)}
	\right)
\end{equation*}
supercommute with the $\mathcal{C}_1$-action on $\tilde{\HH}^m$ and
therefore, they commute with $\tilde{\gamma}$.
Together with the fact that $F(\mathcal{A}^s(g), \mathcal{B}^s(g)) = 0 =
F(\mathcal{B}^s(g), \mathcal{A}^s(g))$, a similar argument as in
\cref{Ch pullback} shows that
\begin{align*}
	& \Tr_{\tilde{\HH}^m}\left(\tilde{\gamma}
	\sum_{k=1}^{N}\Phi_1^{\tilde{\mathscr{M}}_{\mathbb{T}}^{(m)}}\left(
	\mathcal{A}^s(g)^{\otimes(k-1)} \otimes
	\mathcal{B}^s(g)
	\otimes \mathcal{A}^s(g)^{\otimes(N-k)}
	\right)\right) \\
	& \qquad = \Tr_{\tilde{\HH}^m}\left(\tilde{\gamma} \tilde{\mathbf{c}}_{\T,
		m}\left(\mathcal{B}^s(g)\right)
	\Phi_1^{\tilde{\mathscr{M}}_{\mathbb{T}}^{(m)}}(\mathcal{A}^s(g)^{\otimes(N-1)})
	\right) \\
	& \qquad = \Tr_{\tilde{\HH}^m}\left(\tilde{\gamma}
	\tilde{\mathbf{c}}_{\mathbb{T}, m}(\omega)
	\Phi_1^{\tilde{\mathscr{M}}_{\mathbb{T}}^{(m)}}(\mathcal{A}^s(g)^{\otimes(N-1)})
	\right).
\end{align*}
Substituting this into above calculations, we have
\begin{align*}
	& \int_0^1 \Tr_{\HH^m}(\dot{Q}_{g,s} e^{-Q_{g,s}^2}) \, \mathrm{d}s \\
	& \quad = \sum_{N=0}^{\infty} \int_0^1 \frac 12
	\Tr_{\tilde{\HH}^m}\left(\tilde{\gamma}
	\sum_{k=1}^{N+1}\Phi_1^{\tilde{\mathscr{M}}_{\mathbb{T}}^{(m)}}\left(
	\mathcal{A}^s(g)^{\otimes(k-1)} \otimes
	\mathcal{B}^s(g)
	\otimes \mathcal{A}^s(g)^{\otimes(N+1-k)}
	\right)\right)\, \mathrm{d}s \\
	& \quad = \sum_{N=1}^{\infty} \int_0^1 \frac 12
	\Tr_{\tilde{\HH}^m}\left(\tilde{\gamma}
	\sum_{k=1}^{N}\Phi_1^{\tilde{\mathscr{M}}_{\mathbb{T}}^{(m)}}\left(
	\mathcal{A}^s(g)^{\otimes(k-1)} \otimes
	\mathcal{B}^s(g)
	\otimes \mathcal{A}^s(g)^{\otimes(N-k)}
	\right)\right)\, \mathrm{d}s \\
	& \quad = \sum_{N=0}^{\infty} \int_0^1 \frac 12
	\Tr_{\tilde{\HH}^m}\left(\tilde{\gamma}
	\sum_{k=1}^{N}\Phi_1^{\tilde{\mathscr{M}}_{\mathbb{T}}^{(m)}}\left(
	\mathcal{A}^s(g)^{\otimes(k-1)} \otimes
	\mathcal{B}^s(g)
	\otimes \mathcal{A}^s(g)^{\otimes(N-k)}
	\right)\right)\, \mathrm{d}s
\end{align*}
which is equal to the pairing $\langle
\mathrm{Ch}_{\mathscr{M}_\T}, \mathrm{Ch}(g)\rangle$, see
\cref{Pairing}.
\printbibliography
\end{document}